\newtheorem{theorem}{Theorem}[section]
\newtheorem{lemma}[theorem]{Lemma}
\newtheorem{corollary}[theorem]{Corollary}
\newcommand{\req}[1]{(\ref{#1})}
\newcommand{\alg}[1]{{\mathbf #1}}
\newcommand{\nmbr}[1]{\mathbb{#1}}
\newcommand{\bc}[1]{{\mathcal #1}}
\newcommand{\dual}[2]{\left\langle #1,#2 \right\rangle}
\title{Numerical approximation of the spectrum of self-adjoint continuously invertible operators}
\author{
	Tom{\'a}{\v s} Gergelits\thanks{Faculty of Mathematics and Physics, Charles University, Prague,
                   Czech Republic. Email: gergelits@karlin.mff.cuni.cz. \textcolor{black}{This author was supported
                   by Lawrence Livermore National Security, LLC Subcontract Award B639388 under Prime
                   Contract No. DE-AC52-07NA27344.}
}
	\and
	Bj{\o}rn Fredrik Nielsen\thanks{Faculty of Science and Technology, Norwegian University of Life Sciences,
                   P.O. Box 5003, NO-1432 {\AA}s, Norway. Email: bjorn.f.nielsen@nmbu.no. Nielsen's work
                   was supported by The Research Council of Norway, project number 239070.}
	\and
	Zden{\v e}k Strako{\v s}\thanks{Faculty of Mathematics and Physics, Charles University, Prague, Czech Republic.
                   Email: strakos@karlin.mff.cuni.cz. Supported by the Grant Agency of the Czech Republic
                   under the contract No. 17-04150J.}
}
\begin{document}
\maketitle

\begin{abstract}
This paper deals with the generalized spectrum of continuously invertible linear operators defined on infinite dimensional Hilbert spaces.
More precisely, we consider two bounded, coercive, and  self-adjoint operators
$\bc{A, B}: V\mapsto V^{\#}$, where $V^{\#}$ denotes the dual of $V$,
and investigate the conditions under which the whole spectrum of $\bc{B}^{-1}\bc{A}:V\mapsto V$
can be approximated to an arbitrary accuracy by the eigenvalues
of the finite dimensional discretization $\bc{B}_n^{-1}\bc{A}_n$. Since $\bc{B}^{-1}\bc{A}$ is continuously invertible,
such an investigation cannot use the concept of uniform (normwise) convergence, and it relies instead
on the pointwise (strong) convergence of $\bc{B}_n^{-1}\bc{A}_n$ to $\bc{B}^{-1}\bc{A}$.

The paper is motivated by operator preconditioning which is employed in the numerical solution of boundary value problems. In this context,
$\bc{A}, \bc{B}: H_0^1(\Omega) \mapsto H^{-1}(\Omega)$  are the standard integral/functional representations
of the differential operators $ -\nabla \cdot (k(x)\nabla u)$ and $-\nabla \cdot (g(x)\nabla u)$, respectively, and $k(x)$ and
$g(x)$ are scalar coefficient functions. The investigated question differs
from the eigenvalue problem studied in the numerical PDE literature which is based on the approximation
of the eigenvalues within the framework of compact operators.

This work follows the path started by the two recent papers published in
[SIAM J. Numer. Anal., 57 (2019), pp.~1369-1394 and 58 (2020), pp.~2193-2211] and addresses one of the open questions
formulated at the end of the second paper.

\end{abstract}

\noindent{\bf Keywords}: Second order PDEs, bounded non-compact operators, generalized spectrum,
numerical approximation, preconditioning.

\section{Introduction.}
\label{introduction}

Extending the path of research started in~\cite{Nie09,Ger19,Ger20}, this paper will
consider the differential operators  $-\nabla \cdot (k(x)\nabla u)$ and $-\nabla \cdot (g(x)\nabla u)$
on the open and bounded Lipschitz domain $\Omega\subset \nmbr{R}^2$, where the scalar functions
$g(x)$ and $k(x)$ are uniformly positive and continuous throughout the closure $\overline{\Omega}$.
The associated operator representations
$\bc{A}, \, \bc{B}: H_0^1(\Omega) \mapsto H^{-1}(\Omega)$, are given by
\begin{align}\label{eq:A}
& \langle \mathcal{A} u, v \rangle = \int_{\Omega} k(x) \nabla u  \cdot  \nabla v,  \quad u,  v \in H_0^1(\Omega), \\
\label{eq:B}
& \langle \mathcal{B} u, v \rangle = \int_{\Omega} g(x) \nabla u  \cdot  \nabla v, \quad u,  v \in H_0^1(\Omega).
\end{align}
In the first part of this paper we characterize the spectrum of the preconditioned operator
\begin{equation}\label{eq:operator}
\bc{B}^{-1}\bc{A}: H_0^1(\Omega)  \to H_0^1(\Omega),
\end{equation}
defined as the complement of the resolvent set, i.e.,
\begin{equation}\label{eq:spectrum}
\mathrm{sp}(\mathcal{B}^{-1} \mathcal{A})
:= \left\{ \lambda \in \mathbb{C}; \,  \lambda \mathcal{I} - \mathcal{B}^{-1} \mathcal{A}
\mbox{ does not have a bounded inverse} \right\}.
\end{equation}
More specifically, we prove that
$$
 \mathrm{sp}(\mathcal{L}^{-1} \mathcal{A}) =
 \left[\inf_{x\in\overline{\Omega}}\
 \frac{k(x)}{g(x)},\,\sup_{x\in\overline{\Omega}}\ \frac{k(x)}{g(x)}\right].
$$

Consider a sequence of finite dimensional subspaces of $H_0^1(\Omega)$ defined via
the nodal polynomial basis functions\footnote{\textcolor{black}{As in~\cite{Ger19}, we consider
conforming FE methods using Lagrange elements.}} $\phi_1,\ldots,\phi_n$ with the local supports
\begin{equation}\label{eq:support}
\bc{T}_j = \mbox{supp}(\phi_j),\quad j=1,\ldots,n.
\end{equation}
The standard Galerkin finite element discretization of the operators $\bc{A}$ and $\bc{B}$
gives the matrix representations of the discretised operators $\mathcal{A}_n$ and $\mathcal{B}_n$
in terms of the basis $\phi_1,\ldots,\phi_n$,
\begin{align}\label{eq:dis:A}
	[\alg{A}_n]_{ij} &= \int_{\Omega} k(x) \nabla \phi_j\cdot \nabla\phi_i, \, \quad i,j=1,\ldots,n.\\
	[\alg{B}_n]_{ij} &= \int_{\Omega} g(x) \nabla \phi_j\cdot \nabla \phi_i,\, \quad i,j=1,\ldots,n.
\label{eq:dis:B}
\end{align}
Part one of this paper also contains an investigation of the approximation of the whole spectrum
of $\bc{B}^{-1}\bc{A}$ by the eigenvalues of the preconditioned matrices
$\alg{B}_n^{-1} \alg{A}_n$ as  $n \rightarrow \infty$.

In the second part, i.e., in section \ref{Abstract Galerkin discretization}, we generalize the results obtained for \eqref{eq:A} and \eqref{eq:B}.
More precisely, the spectrum issue is explored in terms of an abstract setting
where  $\bc{A}, \bc{B}: V\mapsto V^{\#}$ only are assumed
to be bounded, coercive, and  self-adjoint\footnote{Self-adjoint in the sense that
\textcolor{black}{$\langle \bc{A} u, v \rangle = \langle \bc{A} v, u \rangle,
\langle \bc{B} u, v \rangle = \langle \bc{B} v, u \rangle$ for all $u, v \in V$, where
$\langle\cdot,\cdot\rangle: V^\#\times V \mapsto \nmbr{R}$ is the duality pairing.
Equivalently,}
$\tau \bc{A}$, $\tau \bc{B}: V \rightarrow V$ are self-adjoint, where $\tau$ is the Riesz map.}
\textcolor{black}{linear} operators. Here,  $V^\#$
denotes the dual of $V$ consisting of all linear bounded functionals from the infinite dimensional
Hilbert space $V$ to $\nmbr{R}$. We present a condition under which the whole spectrum
of $\bc{B}^{-1}\bc{A}: V\mapsto V$, defined as the complement of the resolvent set, is approximated as
$n \rightarrow \infty$ to an arbitrary accuracy by the eigenvalues of the finite dimensional
discretizations $\bc{B}_n^{-1}\bc{A}_n$. More precisely, we will concentrate on Galerkin
discretizations using a sequence $\{V_n\}$ of subspaces
$V_n  \subset V$ satisfying the  approximation property
\begin{equation}\label{eq:approximation_property_of subspaces}
\lim_{n \rightarrow \infty} \inf_{v \in V_n} \| w - v \| = 0   \quad \mbox{for all} \;
 w \in V \,.
\end{equation}

Since $\bc{B}^{-1}\bc{A}$ is continuously invertible and $V$ is of infinite dimension, such an investigation
cannot be based on the uniform (normwise) convergence, and it relies instead upon the pointwise
(strong) convergence of $\bc{B}_n^{-1}\bc{A}_n$ to $\bc{B}^{-1}\bc{A}$.

\section{Preconditioning by Laplacian ($g(x) = 1$).}
\label{Preconditioning by Laplacian}

Considering the case $g(x) = 1$,  i.e., the preconditioner equals the operator representation
of the Laplacian $\bc{B} = \bc{L}$, the paper~\cite{Ger20} determines the spectrum of
the preconditioned operator $\bc{L}^{-1}\bc{A}$ in the following way
(for brevity we use a bit stronger assumptions than in~\cite{Ger20}  and  consider a uniformly positive
scalar coefficient function $k(x)$):

\begin{theorem}[cf.~\cite{Ger20}, Theorem~1.1]
\label{th:Laplacian_preconditioning_operator_spectrum}
Consider an open and bounded Lipschitz domain $\Omega\subset \nmbr{R}^2$.
Assume that the scalar function $k(x)$ is uniformly positive and continuous throughout
the closure $\overline{\Omega}$. Then the spectrum of the operator $\bc{L}^{-1}\bc{A}$
equals the interval
\begin{equation}\label{eq:result1-L}
 \mathrm{sp}(\mathcal{L}^{-1} \mathcal{A}) =
 \left[\inf_{x\in\overline{\Omega}}\ k(x),\,\sup_{x\in\overline{\Omega}}\ k(x)\right].
\end{equation}
\end{theorem}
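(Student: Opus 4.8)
My plan is to exploit the fact that, with the natural inner product $(u,v):=\int_\Omega\nabla u\cdot\nabla v=\langle\mathcal{L}u,v\rangle$ on $H_0^1(\Omega)$, the operator $T:=\mathcal{L}^{-1}\mathcal{A}$ is bounded and self-adjoint, so that its spectrum is real and its location is governed by the numerical range. Indeed, $(Tu,v)=\langle\mathcal{L}(\mathcal{L}^{-1}\mathcal{A}u),v\rangle=\langle\mathcal{A}u,v\rangle=\int_\Omega k\,\nabla u\cdot\nabla v$ is symmetric in $u,v$, and the uniform positivity and boundedness of $k$ make $T$ bounded and coercive. For such an operator one has $\mathrm{sp}(T)\subset[m,M]$ with $m=\inf_{u\neq0}(Tu,u)/\|u\|^2$ and $M=\sup_{u\neq0}(Tu,u)/\|u\|^2$, both endpoints belonging to $\mathrm{sp}(T)$.

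First I would compute the Rayleigh quotient,
\[
\frac{(Tu,u)}{\|u\|^2}=\frac{\int_\Omega k(x)\,|\nabla u|^2}{\int_\Omega|\nabla u|^2}\in\Big[\inf_{\overline\Omega}k,\ \sup_{\overline\Omega}k\Big],
\]
which immediately yields the inclusion $\mathrm{sp}(T)\subset[\inf k,\sup k]$. The reverse inclusion, namely that the \emph{entire} interval, and not merely its endpoints, is filled by the spectrum, is the real content of the theorem and the step I expect to be the main obstacle.

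For the reverse inclusion I would invoke the Weyl-sequence characterization: $\lambda\in\mathrm{sp}(T)$ as soon as there exist $u_j$ with $\|u_j\|=1$ and $\|(T-\lambda)u_j\|\to0$. The construction rests on a single estimate. Setting $w=(T-\lambda)u$ and applying $\mathcal{L}$ gives $\int_\Omega\nabla w\cdot\nabla v=\int_\Omega(k-\lambda)\,\nabla u\cdot\nabla v$ for all $v\in H_0^1(\Omega)$; taking $v=w$ and using the Cauchy--Schwarz inequality produces
\[
\|(T-\lambda)u\|\ \le\ \Big(\int_\Omega(k-\lambda)^2|\nabla u|^2\Big)^{1/2}\ \le\ \Big(\sup_{\mathrm{supp}(\nabla u)}|k-\lambda|\Big)\,\|u\|.
\]
Thus it suffices to concentrate $\nabla u$ in a region where $k$ is close to $\lambda$.

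Finally, for any $\lambda\in[\inf k,\sup k]$ and any $\varepsilon>0$, continuity of $k$ on $\overline\Omega$ (together with the intermediate value property, which for connected $\Omega$ locates interior points where $k$ takes values arbitrarily near $\lambda$) supplies an open ball $B\subset\Omega$ on which $|k-\lambda|<\varepsilon$. Choosing $u_\varepsilon\in H_0^1(\Omega)$ to be a bump function with $\mathrm{supp}(\nabla u_\varepsilon)\subset B$ and $\|u_\varepsilon\|=1$, the displayed estimate gives $\|(T-\lambda)u_\varepsilon\|<\varepsilon$. Letting $\varepsilon\to0$ produces the required Weyl sequence, so $\lambda\in\mathrm{sp}(T)$, and combining the two inclusions proves \eqref{eq:result1-L}. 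The delicate points to watch are ensuring that the localized functions genuinely lie in $H_0^1(\Omega)$ (handled by taking balls interior to $\Omega$) and that every intermediate value of $k$ is indeed approximated from the interior, which is precisely where the continuity and the intermediate value property enter.
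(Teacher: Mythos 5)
Your proof is correct and follows essentially the same route as the paper's own argument (given for the general case in the proof of Theorem~\ref{th:operator} and specialized here to $g \equiv 1$): the inclusion $\mathrm{sp}(\mathcal{L}^{-1}\mathcal{A}) \subset [\inf_{\overline\Omega} k, \sup_{\overline\Omega} k]$ via self-adjointness in the $\langle\mathcal{L}\,\cdot,\cdot\rangle$ inner product and the Rayleigh quotient, and the reverse inclusion via unit test functions whose gradients are concentrated on small balls where $k$ is close to $\lambda$ --- your Weyl-sequence formulation is just the contrapositive of the paper's bounded-inverse contradiction. Your treatment of the endpoints (approximating boundary values of $k$ from interior points, which your ball construction absorbs directly) is a minor, equally valid variant of the paper's appeal to the fact that the endpoints of the interval in \eqref{NBF0} belong to the spectrum of a self-adjoint operator.
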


\noindent
In other words, for a uniformly positive continuous function $k(x)$, the spectrum
of $\bc{L}^{-1}\bc{A}$ equals the range $k(\overline{\Omega})$.

Eigenvalues of the discretized operator $\mathcal{L}_n^{-1} \mathcal{A}_n$, that is represented
by the matrix $\alg{L}_n^{-1}\alg{A}_n$, can be approximated using the following
theorem from~\cite{Ger19}. (Here we again use assumptions conforming
to the setting in the current paper.)

\begin{theorem}[cf.~\cite{Ger19}, Theorem~3.1]
\label{th:Laplacian_preconditioning_operator_matrix}
Let $0< \lambda_1\leq\lambda_2\leq\ldots\leq\lambda_n$ be the eigenvalues
of $\alg{L}_n^{-1}\alg{A}_n$, where $\alg{A}_n$ and $\alg{L}_n$ are defined in
\eqref{eq:dis:A} and \eqref{eq:dis:B}\footnote{With $g(x) = 1$.}, respectively.
Let $k(x)$ be uniformly positive and continuous throughout the closure $\overline{\Omega}$.
Then there exist a (possibly non-unique) permutation $\pi$ such that the eigenvalues
$\lambda_{\pi(j)}$ of the matrix $\alg{L}_n^{-1}\alg{A}_n$ satisfy
\begin{equation}\label{eq:result2-L}
\lambda_{\pi(j)} \in
\left[\inf_{ x\in\bc{T}_j} k(x),\sup_{ x\in\bc{T}_j} k(x)\right],\quad j = 1,\ldots,n,
\end{equation}
where $\bc{T}_j$ is defined in~(\ref{eq:support}).
\end{theorem}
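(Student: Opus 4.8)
The plan is to localize the generalized eigenvalues of the pencil $(\alg{A}_n,\alg{L}_n)$ by an inertia (sign‑counting) argument, and then to convert the resulting counts into the claimed matching by a Hall‑type combinatorial lemma. The starting observation is that, for every $\lambda\in\nmbr{R}$, the matrix $\alg{A}_n-\lambda\alg{L}_n$ is again a stiffness matrix, now with the coefficient $k(x)-\lambda$: writing $u_h=\sum_{i=1}^n u_i\phi_i$ for a coefficient vector $u\in\nmbr{R}^n$, its quadratic form is
\[
u^{T}(\alg{A}_n-\lambda\alg{L}_n)\,u=\int_\Omega \bigl(k(x)-\lambda\bigr)\,|\nabla u_h|^2\,dx.
\]
Since $\alg{L}_n$ is symmetric positive definite, Sylvester's law of inertia shows $\alg{A}_n-\lambda\alg{L}_n$ is congruent to $\alg{L}_n^{-1/2}\alg{A}_n\alg{L}_n^{-1/2}-\lambda\alg{I}$, so the number of eigenvalues $\lambda_i$ of $\alg{L}_n^{-1}\alg{A}_n$ with $\lambda_i<\lambda$ equals the number of negative eigenvalues of $\alg{A}_n-\lambda\alg{L}_n$. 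Everything thus reduces to bounding this inertia from both sides in terms of $\underline{k}_j:=\inf_{x\in\bc{T}_j}k(x)$ and $\overline{k}_j:=\sup_{x\in\bc{T}_j}k(x)$.

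Both bounds come from exhibiting test subspaces spanned by subsets of the basis functions. For the lower bound, fix $\lambda$ and let $N:=\{\,j:\overline{k}_j<\lambda\,\}$. On the support $\bc{T}_j$ of any $j\in N$ we have $k(x)-\lambda<0$, so on $\operatorname{span}\{\phi_j:j\in N\}$ the form above is strictly negative for every nonzero $u$ (a nonzero element of $H_0^1(\Omega)$ cannot have a vanishing gradient). A subspace on which a symmetric form is negative definite has dimension at most the number of negative eigenvalues, whence $\#\{i:\lambda_i<\lambda\}\ge\#\{j:\overline{k}_j<\lambda\}$. For the upper bound, set $P:=\{\,j:\underline{k}_j\ge\lambda\,\}$; at every point of $\bigcup_{j\in P}\bc{T}_j$ one has $k(x)-\lambda\ge0$, so the form is positive semidefinite on $\operatorname{span}\{\phi_j:j\in P\}$, which therefore meets the negative eigenspace trivially. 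Hence $|P|\le n-\#\{i:\lambda_i<\lambda\}$, i.e. $\#\{i:\lambda_i<\lambda\}\le\#\{j:\underline{k}_j<\lambda\}$. Combining, for every $\lambda\in\nmbr{R}$,
\[
\#\{\,j:\overline{k}_j<\lambda\,\}\ \le\ \#\{\,i:\lambda_i<\lambda\,\}\ \le\ \#\{\,j:\underline{k}_j<\lambda\,\}.
\]

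The final step is purely combinatorial: this two‑sided count, valid for all $\lambda$, is exactly the necessary and sufficient condition for the existence of a permutation $\pi$ with $\lambda_{\pi(j)}\in[\underline{k}_j,\overline{k}_j]$ for each $j$. I would establish sufficiency either by Hall's marriage theorem applied to the bipartite graph joining each eigenvalue to the intervals containing it — where the interval (consecutive‑ones) structure makes the critical subsets easy to control — or, more concretely, by the standard greedy assignment that processes the intervals in order of increasing right endpoint $\overline{k}_j$ and hands each the smallest still‑unused eigenvalue lying in it; the left inequality guarantees that such an eigenvalue is always available, and the right inequality guarantees that none is exhausted prematurely.

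I expect the two inertia estimates to be short and robust; the genuine care lies in the combinatorial lemma and in the boundary bookkeeping. Because the counting inequalities are strict ($<\lambda$), passing to the \emph{closed} intervals $[\underline{k}_j,\overline{k}_j]$ and handling repeated eigenvalues or coincidences among the endpoints $\underline{k}_j,\overline{k}_j$ requires a limiting argument (taking one‑sided limits in $\lambda$). This endpoint/tie analysis, rather than any single hard inequality, is the main obstacle to a fully rigorous proof, and it is also where the announced non‑uniqueness of $\pi$ originates.
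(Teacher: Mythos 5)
Your two inertia estimates are correct (and the congruence/Sylvester reduction is fine), but the proof does not close at the combinatorial step: the two-sided counting condition you derive is \emph{necessary} for the existence of the permutation, yet it is \emph{not sufficient}, contrary to what you assert. Counterexample with $n=2$: take $[\underline{k}_1,\overline{k}_1]=[0,0]$, $[\underline{k}_2,\overline{k}_2]=[-1,1]$, and eigenvalues $\lambda_1=-1$, $\lambda_2=1$. For every $\lambda\in\nmbr{R}$ one checks
\[
\#\{\,j:\overline{k}_j<\lambda\,\}\ \le\ \#\{\,i:\lambda_i<\lambda\,\}\ \le\ \#\{\,j:\underline{k}_j<\lambda\,\},
\]
yet no eigenvalue lies in $[0,0]$, so no admissible $\pi$ exists, and your greedy assignment stalls at the very first interval (so the claim that ``the left inequality guarantees that such an eigenvalue is always available'' is false). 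What a matching actually requires is Hall's condition: for \emph{every} subset $S$, at least $|S|$ eigenvalues in $\bigcup_{j\in S}[\underline{k}_j,\overline{k}_j]$, which by the standard reduction to connected families amounts to at least $|S|$ eigenvalues in the hull $[a_S,b_S]$, $a_S=\min_{j\in S}\underline{k}_j$, $b_S=\max_{j\in S}\overline{k}_j$. Your global counts cannot deliver this: subtracting them gives $\#\{i: a_S\le\lambda_i\le b_S\}\ge \#\{j:\overline{k}_j\le b_S\}-\#\{j:\underline{k}_j< a_S\}$, and every ``straddling'' index $j'$ with $\underline{k}_{j'}<a_S$ and $\overline{k}_{j'}>b_S$ (perfectly possible for ranges of $k$ over nested or overlapping supports) degrades this bound by one below $|S|$. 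So the difficulty is not, as you anticipated, endpoint/tie bookkeeping between strict and closed intervals — it is that the counting condition retains strictly less information than Hall's condition.

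The gap also cannot be repaired within the inertia framework alone. Running your two subspace arguments on $\operatorname{span}\{\phi_j: j\in S\}$ itself (equivalently, Cauchy interlacing for the principal subpencil, whose Rayleigh quotients lie in $[a_S,b_S]$) yields $\#\{i:\lambda_i\le b_S\}\ge|S|$ and $\#\{i:\lambda_i\ge a_S\}\ge|S|$, which combine only to $2|S|-n$ eigenvalues in $[a_S,b_S]$ — useless for small $|S|$. One needs an argument that pins down the \emph{same} $|S|$ eigenvalues from both sides simultaneously, and this is exactly what the paper's proof (Lemma~\ref{th:lemma} with $g=1$, following~\cite{Ger19}) supplies: redefining the coefficient as the constant $K$ on $\bc{T}_{\bc{J}}$ makes $K$ an eigenvalue of $\alg{L}_n^{-1}\tilde{\alg{A}}_{\bc{J},n}$ of multiplicity at least $|\bc{J}|$ (since $\tilde{\alg{A}}_{\bc{J},n}\alg{e}_j=K\alg{L}_n\alg{e}_j$ for $j\in\bc{J}$), and the Weyl-type perturbation bound $|\theta|\le\sup_{x\in\bc{T}_{\bc{J}}}|k(x)-K|$, with $K$ the midpoint of the range, places at least $|\bc{J}|$ eigenvalues of $\alg{L}_n^{-1}\alg{A}_n$ in $[\inf_{\bc{T}_{\bc{J}}}k,\sup_{\bc{T}_{\bc{J}}}k]$ — which is precisely the Hall condition that your counting inequalities fail to imply. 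Your inertia computation is correct and instructive as a one-sided localization, but to prove the theorem you must replace the claimed equivalence by a subset-wise count of this perturbation--Weyl type before invoking Hall's theorem.
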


\noindent
Consequently, there is a one-to-one mapping (possibly non-unique) between the eigenvalues
of $\alg{L}_n^{-1}\alg{A}_n$ and the ranges of $k(x)$ over the supports of the individual
basis functions. With an appropriate grid refinement of the discretization, the size of the intervals
containing the individual eigenvalues of $\alg{L}_n^{-1}\alg{A}_n$ converge
linearly to zero.\footnote{\textcolor{black}{An interesting application inspired by this result
that uses a different approach is presented in~\cite{LPZ20}}}

The paper~\cite{Ger20} formulates a dual open question about the {\em distribution
of the eigenvalues of the discretized operators within the interval~(\ref{eq:result1-L}).}
Assuming in addition that $k(x) \in \bc{C}^2(\overline{\Omega})$, we will now show that
theorems~\ref{th:Laplacian_preconditioning_operator_spectrum}
and~\ref{th:Laplacian_preconditioning_operator_matrix} yield that
{\em any point in the spectrum} of the infinite dimensional operator $\bc{L}^{-1}\bc{A}$
is approximated, as $n \rightarrow \infty$, to an arbitrary accuracy
by the eigenvalues~(\ref{eq:result2-L}) of the matrices $\alg{L}_n^{-1}\alg{A}_n$.
The individual points in the infinite dimensional spectrum can, however, be approximated
with different speed that is at least linear and uniformly bounded from zero.

Consider an arbitrary point $\mu$ in the spectrum of the operator
$\bc{L}^{-1}\bc{A}$. It should be noted that $\mu$ may not be an eigenvalue,
and that our investigation differs from the eigenvalue problem studied in the numerical PDE literature
which is based on approximations of the eigenvalues within the framework of
infinite dimensional compact solution operators.

%
Using theorem~\ref{th:Laplacian_preconditioning_operator_spectrum}, $\mu$ is the image under
$k(x)$ of some point $y \in \overline{\Omega}$, i.e., $\mu = k(y)$.
\textcolor{black}{We first consider the case $y \in {\Omega}$. The case
$y \in {\partial \Omega}$ will be resolved later by a simple limiting argument.}
Let $\varepsilon > 0$ be an arbitrarily small positive constant, and let
$$ \delta =  \frac{\varepsilon}{2 \sup_{x\in\Omega} \|\nabla k(x)\|},$$
provided that $\sup_{x\in\Omega} \|\nabla k(x)\| > 0$. (The case $\sup_{x\in\Omega} \|\nabla k(x)\| = 0$ is uninteresting because then $\bc{A} = c \bc{L}$ for some constant $c$.)
Consider further a Galerkin discretization such that the support of at least one of
the nodal basis functions\footnote{Supports of all discretization functions are contained in $\overline{\Omega}$.}
that contains the point $y$  is itself contained in the disc with center $y$ and radius $\delta$. Denote this support $\bc{T}_j$ and the associated eigenvalue
of the discretized operator given by theorem~\ref{th:Laplacian_preconditioning_operator_matrix}
as $\lambda_{\pi(j)}$. Using Corollary~3.2 in~\cite{Ger19} (with $y=\hat{x}_j$)
%
\begin{align}
	|\lambda_{\pi(j)} - k(y)|
		&\leq \sup_{ x\in\bc{T}_j} \|k(x) - k(y)\|
			 \nonumber\\
		&\leq \delta \, \|\nabla k(y)\|
			+ \frac{1}{2} \delta^2 \sup_{ x\in\bc{T}_j} \|D^2k( x)\|,
             \label{eq:linear_bound}
\end{align}
where $D^2k(x)$ denotes the second order derivative
of the function $k(x)$\footnote{{See \cite[Section 1.2]{CiaB02} for the definition
{of the second order derivative.}}}. For $\varepsilon$ sufficiently small we thus get, after a simple manipulation,
\begin{equation}\label{eq:h_taylor}
	|\lambda_{\pi(j)} - \mu| \leq \varepsilon.
\end{equation}
\textcolor{black}{If $\mu = k(y)$ and $y \in \partial \Omega$, the same conclusion can be obtained using
the previous derivation and the continuity of $k(x)$ throughout $\overline{\Omega}. $}
Summing up, this proves the following theorem:
			
\begin{theorem}[Approximation of the spectrum by matrix eigenvalues]
\label{th:Spectrum_Approximation_Laplacian_preconditioning}
Let $k(x)$ be twice continuously differentiable and uniformly positive throughout
the closure $\overline{\Omega}$. Let the maximal diameter of the local supports
of the basis functions used in the Galerkin discretization (\ref{eq:support})-(\ref{eq:dis:B})
vanishes as $n \rightarrow \infty$.
Then any point in the spectrum of the operator $\bc{L}^{-1}\bc{A}$ is for $n \rightarrow \infty$
approximated to an arbitrary accuracy by the eigenvalues of the matrices  $\alg{L}_n^{-1}\alg{A}_n$
representing the discretized preconditioned operators.
\end{theorem}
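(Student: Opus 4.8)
The plan is to combine the two ingredients already available for this special case: the exact identification of the spectrum in Theorem~\ref{th:Laplacian_preconditioning_operator_spectrum} and the support-localised eigenvalue inclusions in Theorem~\ref{th:Laplacian_preconditioning_operator_matrix}, and then to let the mesh refinement squeeze the two intervals together. First I would fix an arbitrary $\mu \in \mathrm{sp}(\bc{L}^{-1}\bc{A})$. By \eqref{eq:result1-L} the spectrum coincides with the range $k(\overline{\Omega})$, so $\mu = k(y)$ for some $y \in \overline{\Omega}$. I would settle the interior case $y \in \Omega$ first and postpone $y \in \partial\Omega$ to a limiting argument at the very end.

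Given a target accuracy $\varepsilon > 0$, the idea is to convert it into a spatial scale $\delta$ on which $k$ varies by less than $\varepsilon$. Since $k \in \bc{C}^2(\overline{\Omega})$, a first order Taylor expansion about $y$ gives, for every $x$ with $\|x-y\| \leq \delta$,
\[
 |k(x) - k(y)| \leq \delta\,\|\nabla k(y)\|
   + \tfrac{1}{2}\,\delta^2 \sup_{z\in\overline{\Omega}}\|D^2 k(z)\|,
\]
so the choice $\delta \sim \varepsilon / \sup\|\nabla k\|$ used before the statement makes the right-hand side at most $\varepsilon$ once $\varepsilon$ is small. The refinement hypothesis — the maximal diameter of the supports $\bc{T}_j$ tends to zero — then ensures that for all sufficiently large $n$ there is a nodal basis function whose support $\bc{T}_j$ both contains $y$ and has diameter below $\delta$, hence lies inside the disc of radius $\delta$ about $y$.

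With such a $\bc{T}_j$ fixed, Theorem~\ref{th:Laplacian_preconditioning_operator_matrix} furnishes, after the permutation $\pi$, an eigenvalue $\lambda_{\pi(j)}$ of $\alg{L}_n^{-1}\alg{A}_n$ lying in $[\inf_{\bc{T}_j} k,\ \sup_{\bc{T}_j} k]$, cf.~\eqref{eq:result2-L}. Since $y \in \bc{T}_j$, the value $\mu = k(y)$ lies in the very same interval, so $|\lambda_{\pi(j)} - \mu|$ cannot exceed the oscillation of $k$ over $\bc{T}_j$, which the previous step bounds by $\varepsilon$. Thus for every large $n$ the matrix $\alg{L}_n^{-1}\alg{A}_n$ possesses an eigenvalue within $\varepsilon$ of $\mu$, which is exactly the claimed approximation.

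The only step requiring genuine care — and the expected main obstacle — is the geometric covering: I must be sure that each spectral value is the image of a point actually sitting inside a shrinking support. For interior $y$ this is automatic, because in a conforming Lagrange discretisation $y$ belongs to the support of at least one nodal basis function, and the refinement forces every such support to shrink. A boundary point $y \in \partial\Omega$ need not lie in any support, so I would instead take interior points $y_m \to y$, apply the interior conclusion to each $k(y_m)$, and pass to the limit using the uniform continuity of $k$ on the compact set $\overline{\Omega}$; this is the limiting argument flagged in the text and it closes the boundary case.
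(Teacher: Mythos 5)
Your proposal is correct and follows essentially the same route as the paper's own argument: identify $\mu = k(y)$ via Theorem~\ref{th:Laplacian_preconditioning_operator_spectrum}, use the mesh-refinement hypothesis to place $y$ inside a support $\bc{T}_j$ of diameter below $\delta \sim \varepsilon/\sup\|\nabla k\|$, invoke Theorem~\ref{th:Laplacian_preconditioning_operator_matrix} to get an eigenvalue in $[\inf_{\bc{T}_j}k,\sup_{\bc{T}_j}k]$, bound the resulting error by a second-order Taylor expansion, and settle $y \in \partial\Omega$ by a limiting argument with continuity of $k$. The only cosmetic difference is that you obtain the bound $|\lambda_{\pi(j)} - k(y)| \leq \sup_{x\in\bc{T}_j}|k(x)-k(y)|$ directly from the interval inclusion, whereas the paper cites Corollary~3.2 of~\cite{Ger19} for the same estimate.
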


\noindent
The linear part of the upper bound~(\ref{eq:linear_bound}) for the approximation error
is for the individual spectral points $\mu$ proportional to the size of the gradient
$\| \nabla k(y) \|$, where $k(y) = \mu$. Since the size of this gradient is uniformly bounded from above throughout
$\overline{\Omega}$, the speed of convergence towards the individual
spectral points, as the h-refinement proceeds, is uniformly bounded from zero throughout
the whole spectrum of $\bc{L}^{-1}\bc{A}$. It can however differ for different spectral points.

\section{Generalization to \textcolor{black}{(piecewise)} continuous and uniformly positive $g(x)$.}
\label{Preconditioning by a general B}

The purpose of this section is to generalize the results presented above to preconditioners in the form \eqref{eq:B}. We first present the theorems and a corollary, and thereafter their proofs are discussed.

The content of the present section is motivated by the desire to increase our knowledge about second order differential operators and preconditioning issues. In particular, to obtain a better understanding of the benefits of applying piecewise constant preconditioners.

\begin{theorem}[Spectrum of the infinite dimensional preconditioned operator]
\label{th:operator}
\mbox{} \newline Consider an open and bounded Lipschitz domain $\Omega\subset \nmbr{R}^2$.
Assume that the scalar functions  $g(x)$ and $k(x)$ are uniformly positive and continuous
throughout the closure $\overline{\Omega}$.
Then the spectrum of the operator $\bc{B}^{-1}\bc{A}$, defined in \eqref{eq:spectrum}, equals
\begin{equation}\label{eq:result}
 \mathrm{sp}(\mathcal{B}^{-1} \mathcal{A}) = \left[\inf_{x\in\overline{\Omega}}\
 \frac{k(x)}{g(x)},\,\sup_{x\in\overline{\Omega}}\ \frac{k(x)}{g(x)}\right].
\end{equation}
\end{theorem}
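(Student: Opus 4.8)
The plan is to treat $\bc{B}^{-1}\bc{A}$ as a bounded self-adjoint operator on $H_0^1(\Omega)$ equipped with the energy inner product induced by $\bc{B}$, to obtain the inclusion ``$\subseteq$'' from a numerical-range (Rayleigh-quotient) estimate, and to obtain the reverse inclusion by constructing explicit singular (Weyl) sequences localized around points of $\overline\Omega$. This essentially generalizes the mechanism underlying Theorem~\ref{th:Laplacian_preconditioning_operator_spectrum}.

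First I would equip $H_0^1(\Omega)$ with $(u,v)_{\bc{B}} := \langle \bc{B} u,v\rangle = \int_\Omega g(x)\,\nabla u\cdot\nabla v$. Since $g$ is uniformly positive and continuous on the compact set $\overline\Omega$, this inner product is equivalent to the standard $H_0^1$ inner product, and $\bc{B}^{-1}\bc{A}$ is bounded and self-adjoint with respect to $(\cdot,\cdot)_{\bc{B}}$. Its Rayleigh quotient is
$$
R(u) = \frac{(\bc{B}^{-1}\bc{A}\,u,u)_{\bc{B}}}{(u,u)_{\bc{B}}} = \frac{\langle\bc{A}u,u\rangle}{\langle\bc{B}u,u\rangle} = \frac{\int_\Omega k(x)\,|\nabla u|^2}{\int_\Omega g(x)\,|\nabla u|^2},
$$
which is a weighted average of $k(x)/g(x)$ and hence satisfies $\inf_{\overline\Omega}(k/g)\le R(u)\le\sup_{\overline\Omega}(k/g)$ for every nonzero $u$. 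Because the spectrum of a bounded self-adjoint operator is contained in the closure of its numerical range, this gives $\mathrm{sp}(\bc{B}^{-1}\bc{A})\subseteq\left[\inf_{\overline\Omega}(k/g),\,\sup_{\overline\Omega}(k/g)\right]$.

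For the reverse inclusion I would show that every value $\mu=k(y)/g(y)$, $y\in\overline\Omega$, belongs to the spectrum. For an interior point $y\in\Omega$ fix a smooth bump $\psi$ supported in the unit ball and put $u_\delta(x)=\psi((x-y)/\delta)$, with $\delta$ small enough that $\mathrm{supp}\,u_\delta\subset\Omega$. Writing $(\bc{B}^{-1}\bc{A}-\mu\bc{I})u_\delta=\bc{B}^{-1}(\bc{A}-\mu\bc{B})u_\delta$ and testing against arbitrary $v$,
$$
\langle(\bc{A}-\mu\bc{B})u_\delta,v\rangle = \int_\Omega\bigl(k(x)-\mu\,g(x)\bigr)\,\nabla u_\delta\cdot\nabla v,
$$
where $k(y)-\mu\,g(y)=0$, so uniform continuity of $k$ and $g$ yields $|k(x)-\mu g(x)|\le\omega(\delta)$ on $\mathrm{supp}\,u_\delta$ with $\omega(\delta)\to0$. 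Bounding the right-hand side by $\omega(\delta)\,\|u_\delta\|_{H_0^1}\|v\|_{H_0^1}$ and converting to the energy norm through the equivalence of $(\cdot,\cdot)_{\bc{B}}$ and $(\cdot,\cdot)_{H_0^1}$ gives $\|(\bc{B}^{-1}\bc{A}-\mu\bc{I})u_\delta\|_{\bc{B}}\le C\,\omega(\delta)\,\|u_\delta\|_{\bc{B}}$, so the normalized $u_\delta$ form a singular sequence and $\mu\in\mathrm{sp}(\bc{B}^{-1}\bc{A})$. Boundary points $y\in\partial\Omega$ then follow from the continuity of $k/g$ and the closedness of the spectrum. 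Finally, since the domain $\Omega$ is connected and $k/g$ is continuous on the compact set $\overline\Omega$, the intermediate value theorem identifies the range of $k/g$ with the full interval $\left[\inf_{\overline\Omega}(k/g),\sup_{\overline\Omega}(k/g)\right]$; as this range is contained in the spectrum, which is in turn contained in the same interval, equality \eqref{eq:result} follows.

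I expect the main obstacle to be the reverse inclusion, i.e.\ proving that the spectrum has \emph{no gaps} inside the interval (this is also where the connectedness of $\Omega$ becomes essential). The delicate point is verifying that the localized bumps are genuine approximate eigenvectors in the energy norm: one must pass from the residual estimate phrased in the duality pairing to a bound in $\|\cdot\|_{\bc{B}}$ uniformly over all test functions $v$, and check that the homogeneity factors of the dilation cancel in the quotient $\|(\bc{B}^{-1}\bc{A}-\mu\bc{I})u_\delta\|_{\bc{B}}/\|u_\delta\|_{\bc{B}}$, so that it is controlled solely by the modulus of continuity $\omega(\delta)$. By contrast, the inclusion ``$\subseteq$'' is essentially immediate from the Rayleigh quotient.
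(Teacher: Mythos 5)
Your proposal is correct and follows essentially the same route as the paper's own proof: the inclusion $\mathrm{sp}(\bc{B}^{-1}\bc{A})\subseteq[\inf k/g,\sup k/g]$ via the Rayleigh quotient in the $(\cdot,\cdot)_{\bc{B}}$ inner product, and the reverse inclusion via normalized functions supported in shrinking balls around interior points, whose residual $(\lambda_0\bc{I}-\bc{B}^{-1}\bc{A})v$ vanishes in the $\bc{B}$-norm by continuity of $k$ and $g$. The only cosmetic differences are that the paper posits the localized family abstractly and derives a contradiction from the bounded resolvent rather than invoking the Weyl singular-sequence criterion by name, and it recovers the interval's endpoints from the fact that the extrema of the numerical range of a self-adjoint operator lie in its spectrum, whereas you use density of interior values plus closedness of the spectrum; both variants are standard and equivalent here (and your explicit remark that connectedness of $\Omega$ is needed for the intermediate value theorem is a point the paper leaves implicit in the word ``domain'').
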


\textcolor{black}{The next theorem deals with the localization of the eigenvalues of the preconditioned matrix
arising from the discretization. It does not consider the approximation of the spectrum of the infinite dimensional operator $\bc{B}^{-1} \bc{A}$. Analogously to~\cite{Ger19}, we can therefore relax the assumptions
about the continuity of the coefficient functions $k(x)$ and $g(x)$.}

\begin{theorem}[Eigenvalues of the preconditioned matrix]
\label{th:matrix}
Let $0< \lambda_1\leq\lambda_2\leq\ldots\leq\lambda_n$ be the eigenvalues of
$\alg{B}_n^{-1}\alg{A}_n$, where $\alg{A}_n$ and $\alg{B}_n$ are defined in
\eqref{eq:dis:A} and \eqref{eq:dis:B}, respectively. Let $g(x)$ and $k(x)$ be bounded,
uniformly positive and {\em piecewise continuous} functions. Then there exists a (possibly non-unique)
permutation $\pi$ such that the eigenvalues of the matrix $\alg{B}_n^{-1}\alg{A}_n$ satisfy
\begin{equation}\label{eq:theorem}
\lambda_{\pi(j)} \in \left[{\inf_{ x\in\bc{T}_j}} \frac{k(x)}{g(x)},{\sup_{ x\in\bc{T}_j}}
\frac{k(x)}{g(x)}\right],\quad j = 1,\ldots,n,
\end{equation}
where $\bc{T}_j$ is defined in~(\ref{eq:support}).
\end{theorem}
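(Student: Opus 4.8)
The plan is to mirror the proof of Theorem~\ref{th:Laplacian_preconditioning_operator_matrix} from~\cite{Ger19}, the only new ingredient being that the role played there by the (unweighted) element Laplacian matrices is now taken over by their $g$-weighted analogues. First I would exploit the finite element assembly to split both matrices into element contributions, $\alg{A}_n=\sum_e \alg{A}_n^{(e)}$ and $\alg{B}_n=\sum_e \alg{B}_n^{(e)}$, the sums running over the elements $e$ of the triangulation, where $\alg{A}_n^{(e)}$, $\alg{B}_n^{(e)}$ are the symmetric positive semidefinite element matrices supported on the degrees of freedom of $e$. For Lagrange linear elements the gradients $\nabla\phi_i$ are constant on each $e$, so $\int_e k\,\nabla\phi_j\cdot\nabla\phi_i=\bar k_e\,[\alg{L}_n^{(e)}]_{ij}$ with $\bar k_e=|e|^{-1}\int_e k$ and $[\alg{L}_n^{(e)}]_{ij}=\int_e \nabla\phi_j\cdot\nabla\phi_i$; hence $\alg{A}_n^{(e)}=\bar k_e\,\alg{L}_n^{(e)}$ and, likewise, $\alg{B}_n^{(e)}=\bar g_e\,\alg{L}_n^{(e)}$ with $\bar g_e=|e|^{-1}\int_e g$.

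With $\alg{E}_e:=\bar g_e\,\alg{L}_n^{(e)}$ (symmetric positive semidefinite, since $\bar g_e>0$) and $\rho_e:=\bar k_e/\bar g_e$, this rewrites the pencil as $\alg{A}_n=\sum_e \rho_e\,\alg{E}_e$ and $\alg{B}_n=\sum_e \alg{E}_e$, i.e. exactly the structure treated in the Laplacian case, but with the element Laplacians replaced by the reweighted matrices $\alg{E}_e$ and with the scalar $k$ replaced by the element ratios $\rho_e$. The crucial elementary observation is that $\rho_e=\int_e k/\int_e g$ is a $g$-weighted average of $k/g$ over $e$ and therefore lies in $[\inf_{x\in e} k(x)/g(x),\,\sup_{x\in e} k(x)/g(x)]$; consequently, for every element $e\subset\bc{T}_j$ one has $\rho_e\in[\inf_{x\in\bc{T}_j}k(x)/g(x),\,\sup_{x\in\bc{T}_j}k(x)/g(x)]$.

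It then remains to invoke the combinatorial heart of the argument in~\cite{Ger19}: for a pencil of the form $(\sum_e \rho_e\,\alg{E}_e,\ \sum_e \alg{E}_e)$ with positive semidefinite element matrices $\alg{E}_e$ whose supports follow the node--element incidence of the mesh, there exists a permutation $\pi$ matching each eigenvalue $\lambda_{\pi(j)}$ to the node $j$ so that $\lambda_{\pi(j)}$ lies between the smallest and the largest of the $\rho_e$ over the elements $e$ forming the support $\bc{T}_j$. Combined with the bound on $\rho_e$ from the previous step, this places $\lambda_{\pi(j)}$ in the interval~\eqref{eq:theorem}, which is the claim. The existence of the matching is typically obtained either by a majorization argument (bounding partial sums of the ordered eigenvalues via the Courant--Fischer characterization and extracting a system of distinct representatives through Hall's theorem) or by a homotopy in the coefficients $\rho_e$ that tracks the eigenvalue--node pairing continuously.

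The main obstacle is precisely this matching step, together with the need to verify that the proof in~\cite{Ger19} is insensitive to the particular choice of preconditioner, i.e. that it uses only the positive semidefiniteness and the local support pattern of the element matrices and not any feature specific to the unweighted Laplacian; replacing $\alg{L}_n^{(e)}$ by $\alg{E}_e=\bar g_e\,\alg{L}_n^{(e)}$ preserves both properties, so the argument should transfer verbatim. A secondary technical point, already present in~\cite{Ger19}, is that for higher-order Lagrange elements the gradients are not constant on $e$, so the exact factorization $\alg{A}_n^{(e)}=\bar k_e\alg{L}_n^{(e)}$ fails and must be replaced by an estimate in which the deviation is controlled by the oscillation of $k$ and $g$ over $e$; this is harmless for the localization since it only rearranges quantities within $[\inf_{x\in\bc{T}_j}k(x)/g(x),\sup_{x\in\bc{T}_j}k(x)/g(x)]$. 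Finally, the mere piecewise continuity of $k$ and $g$ causes no difficulty, as only the integrals $\bar k_e,\bar g_e$ and the suprema and infima over $\bc{T}_j$ enter the argument.
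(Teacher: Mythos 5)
Your skeleton --- localize eigenvalues via coefficient ratios over the supports $\bc{T}_j$, then extract the one-to-one pairing through Hall's theorem --- is the same as the paper's, and your key elementary observation is correct: $\rho_e=\int_e k/\int_e g$ is a $g$-weighted mean of $k/g$, hence $\rho_e\in[\inf_{x\in e}k(x)/g(x),\sup_{x\in e}k(x)/g(x)]$. But the middle step differs, and the crux of your argument is asserted rather than proved. You reduce (for $P_1$ elements) to the algebraic pencil $\bigl(\sum_e\rho_e\alg{E}_e,\sum_e\alg{E}_e\bigr)$ and then claim the matching machinery of \cite{Ger19} ``should transfer verbatim'' because it only uses positive semidefiniteness and the support pattern. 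That claim is true, but verifying it is exactly the content of the theorem beyond \cite{Ger19}, and the paper carries it out: by Hall's theorem it suffices to show (Lemma~\ref{th:lemma}) that for every index set $\bc{J}$ at least $|\bc{J}|$ eigenvalues of $\alg{B}_n^{-1}\alg{A}_n$ lie in the ratio interval over $\bc{T}_{\bc{J}}$, and this is proved by perturbing the \emph{continuous} coefficient, setting $\tilde{k}_{\bc{J}}=K\,g$ on $\bc{T}_{\bc{J}}$ as in \eqref{NBF6}. Then $\tilde{\alg{A}}_{\bc{J},n}\alg{e}_j=K\alg{B}_n\alg{e}_j$ for $j\in\bc{J}$, so $K$ is an eigenvalue of $\alg{B}_n^{-1}\tilde{\alg{A}}_{\bc{J},n}$ with multiplicity at least $|\bc{J}|$; the Rayleigh quotient of the perturbation $\alg{B}_n^{-1}(\alg{A}_n-\tilde{\alg{A}}_{\bc{J},n})$ is bounded by $\sup_{x\in\bc{T}_{\bc{J}}}|k(x)/g(x)-K|$ using only the uniform positivity of $g$; the symmetric perturbation bound of \cite[Corollary~4.9]{1990_SteSu_B} (after the congruence to $\alg{B}_n^{-1/2}\cdot\alg{B}_n^{-1/2}$) localizes $|\bc{J}|$ eigenvalues within that distance of $K$; and choosing $K$ as the midpoint of the interval finishes it. This is the concrete mechanism behind your ``majorization plus Hall'' option; your ``homotopy'' alternative is not developed enough to assess.

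The comparison also shows that your element-averaging reduction is both unnecessary and restrictive. The exact factorization $\alg{A}_n^{(e)}=\bar{k}_e\alg{L}_n^{(e)}$ holds only for piecewise linear elements, whereas the theorem (footnoted in section~\ref{introduction}) covers Lagrange elements of any order; for higher order your pencil simply does not have the form $\bigl(\sum_e\rho_e\alg{E}_e,\sum_e\alg{E}_e\bigr)$, and your proposed fix via ``oscillation of $k$ and $g$ over $e$'' is under-specified --- what actually survives is the element-wise spectral sandwich $\inf_{x\in e}\frac{k(x)}{g(x)}\,\alg{B}_n^{(e)}\preceq\alg{A}_n^{(e)}\preceq\sup_{x\in e}\frac{k(x)}{g(x)}\,\alg{B}_n^{(e)}$, which is precisely what the paper's continuous-coefficient perturbation exploits without ever forming element matrices. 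On the other hand, for $P_1$ discretizations your route does buy a marginally sharper localization, namely $\lambda_{\pi(j)}$ between the extreme element ratios $\rho_e$ over the elements composing $\bc{T}_j$, a subinterval of \eqref{eq:theorem}. Net assessment: the plan is sound and correctly identifies Hall's theorem as the combinatorial core, but as written it leaves the decisive lemma --- the weighted-pencil eigenvalue count --- unproved, and the paper's direct perturbation $\tilde{k}_{\bc{J}}=Kg$ is the cleaner and more general way to close that gap.
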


\begin{corollary}[{Pairing the eigenvalues and} the nodal values]
\label{th:taylor}
Using the notation and the assumptions of Theorem \ref{th:matrix}, consider any point
${\hat{x}_j}\in {\bc{T}_j}$.
Then the associated eigenvalue $\lambda_{\pi(j)}$ of the matrix $\alg{B}^{-1}\alg{A}$ satisfies
\begin{equation}\label{eq:bound:loose}
	|\lambda_{\pi(j)} - r({\hat{x}_j})| \leq{\sup_{ x\in\bc{T}_j}}|r( x) - r( {\hat{x}_j})|,
\quad{ j=1,\ldots, n,}
\end{equation}
where
\[r(x)\equiv\frac{k(x)}{g(x)}.\]
If, in addition,  $k( x)$ and $g(x) \in \bc{C}^2({\bc{T}_j})$, then
\begin{align}\nonumber
	|\lambda_{\pi(j)} - r({\hat{x}_j})|
		&\leq {\sup_{ x\in\bc{T}_j}}|r( x) - r({\hat{x}_j})| \\
			 \label{eq:h_taylor}
		&\leq \hat{h}\|\nabla r( {\hat{x}_j})\|
			+ \tfrac{1}{2}\hat{h}^2{\sup_{ x\in\bc{T}_j}}\|D^2r( x)\|, \quad{ j=1,\ldots, n,}
\end{align}
where $\hat{h} = {\mathrm{diam}(\bc{T}_j)}$ and $D^2r(x)$ denotes the second order derivative
of $r(x)$.
In particular, \eqref{eq:bound:loose} and \eqref{eq:h_taylor} hold for any discretization mesh node
${\hat{x}_j}\in {\bc{T}_j}$.
\end{corollary}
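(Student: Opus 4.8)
The plan is to treat the two displayed inequalities separately: the loose bound \eqref{eq:bound:loose} follows immediately from the eigenvalue localization already in hand, while the refined bound \eqref{eq:h_taylor} is a second-order Taylor estimate. For \eqref{eq:bound:loose} I would first invoke Theorem~\ref{th:matrix} to place the eigenvalue in the interval determined by $r$ over the support, $\lambda_{\pi(j)} \in [\inf_{x\in\bc{T}_j} r(x),\, \sup_{x\in\bc{T}_j} r(x)]$. Since $\hat{x}_j \in \bc{T}_j$, the nodal value $r(\hat{x}_j)$ lies in the very same interval. Writing $a = \inf_{x\in\bc{T}_j} r(x)$ and $b = \sup_{x\in\bc{T}_j} r(x)$, both $\lambda_{\pi(j)}$ and $r(\hat{x}_j)$ belong to $[a,b]$, so their distance is at most the distance from $r(\hat{x}_j)$ to the farther endpoint, namely $\max\!\big(b - r(\hat{x}_j),\, r(\hat{x}_j) - a\big)$. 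The elementary identity is that this maximum equals $\sup_{x\in\bc{T}_j} |r(x) - r(\hat{x}_j)|$: indeed $|r(x)-r(\hat{x}_j)| = \max\!\big(r(x)-r(\hat{x}_j),\, r(\hat{x}_j)-r(x)\big)$, and the supremum of a pointwise maximum is the maximum of the two suprema, which are exactly $b - r(\hat{x}_j)$ and $r(\hat{x}_j) - a$. This gives \eqref{eq:bound:loose} directly, and notably uses only the sup/inf structure, so mere piecewise continuity of $k$ and $g$ suffices.

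For the refined bound \eqref{eq:h_taylor}, I would first observe that $r = k/g$ inherits $\bc{C}^2$ regularity on $\bc{T}_j$ from $k$ and $g$, since $g$ is bounded away from zero there, so the quotient rule produces continuous second derivatives. I would then apply Taylor's theorem with remainder to $r$ about $\hat{x}_j$: for each $x \in \bc{T}_j$,
\[
r(x) - r(\hat{x}_j) = \nabla r(\hat{x}_j)\cdot(x-\hat{x}_j) + \tfrac{1}{2}(x-\hat{x}_j)^{\top} D^2 r(\xi)\,(x-\hat{x}_j)
\]
for some $\xi$ on the segment joining $\hat{x}_j$ and $x$. Bounding the first term by Cauchy--Schwarz as $\|\nabla r(\hat{x}_j)\|\,\|x-\hat{x}_j\|$ and the second by $\tfrac{1}{2}\|x-\hat{x}_j\|^2\,\|D^2 r(\xi)\|$ in the operator norm, then using $\|x-\hat{x}_j\| \leq \mathrm{diam}(\bc{T}_j) = \hat{h}$ and $\|D^2 r(\xi)\| \leq \sup_{x\in\bc{T}_j}\|D^2 r(x)\|$, yields the stated estimate for each fixed $x$. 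Taking the supremum over $x \in \bc{T}_j$ and combining with \eqref{eq:bound:loose} closes the chain.

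The main technical point I would watch is the applicability of Taylor's theorem when $\bc{T}_j$ is not convex, which is typical for the support of a nodal basis function (a union of elements around a node): the segment from $\hat{x}_j$ to $x$ may momentarily leave $\bc{T}_j$, so one must ensure $r$ stays $\bc{C}^2$ along it. I would resolve this by working on a convex neighborhood on which $k$ and $g$ remain $\bc{C}^2$ — available in the PDE setting, where the coefficients are defined throughout $\overline{\Omega}$ — or, failing convexity, by replacing the straight-segment remainder with a path bound along a curve inside $\bc{T}_j$; in the intended regime of small $\hat{h}$ the segment stays within the region of regularity, so the estimate holds as written. Finally, the closing claim that \eqref{eq:bound:loose} and \eqref{eq:h_taylor} hold for any mesh node $\hat{x}_j \in \bc{T}_j$ is immediate, since $\hat{x}_j$ was an arbitrary point of $\bc{T}_j$ throughout.
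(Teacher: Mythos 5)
Your proof is correct and follows essentially the same route as the paper: the paper disposes of the corollary in one line by citing \cite[Corollary 3.2]{Ger19} applied to the ratio function $r(x)$, and what you write out --- localization of $\lambda_{\pi(j)}$ via Theorem~\ref{th:matrix} yielding \eqref{eq:bound:loose}, then a second-order Taylor bound for $r$ yielding \eqref{eq:h_taylor} --- is precisely the argument that citation encapsulates. Your caveat about non-convexity of $\bc{T}_j$ is a sensible extra precaution (the nodal patch is star-shaped with respect to the node, and in the PDE setting the coefficients are defined throughout $\overline{\Omega}$), but it does not change the substance.
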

\noindent While the proofs of Theorem \ref{th:operator} and Theorem \ref{th:matrix} are presented below, Corollary \ref{th:taylor} follows immediately by applying
\cite[Corollary 3.2]{Ger19} to the ratio function $r(x)$.

\subsection*{Proof of Theorem \ref{th:operator}.}
Recall the definition \eqref{eq:B} of the operator $\bc{B}$, and let us introduce the following notation for the inner product and norm induced by $\bc{B}$,
\begin{align*}
(u,v)_{\bc{B}} &= \langle \mathcal{B} u, v \rangle, \quad u,  v \in H_0^1(\Omega), \\
\| u \|_{\bc{B}}^2 &= (u,u)_{\bc{B}},  \quad u \in H_0^1(\Omega).
\end{align*}
\begin{enumerate}
\item The proof of the fact that
\[ \mathrm{sp}(\mathcal{B}^{-1} \mathcal{A}) \subset  \left[\inf_{x\in\overline{\Omega}}\
\frac{k(x)}{g(x)},\,\sup_{x\in\overline{\Omega}}\ \frac{k(x)}{g(x)}\right] \]
is analogous to the proof in \cite[Section 3]{Ger20}, employing the inner product induced by $\bc{B}$ instead of that induced by the Laplacian $\bc{L}$.
\textcolor{black}{More precisely, using the self-adjointness of the operator $\bc{B}^{-1}\bc{A}$
 with respect to the inner product $(\cdot,\cdot)_{\bc{B}}$,
%
%
the spectrum
of $\bc{B}^{-1}\bc{A}$ is real and it is contained in the interval}
\begin{align}
\nonumber
 \mathrm{sp}(\mathcal{B}^{-1} \mathcal{A})
&\subset
\left[\inf_{u\in H_0^1(\Omega)} \frac{(\bc{B}^{-1}\bc{A}u,u)_{\bc{B}}}{(u,u)_{\bc{B}}},\
		\sup_{u\in H_0^1(\Omega)} \frac{(\bc{B}^{-1}\bc{A}u,u)_{\bc{B}}}{(u,u)_{\bc{B}}}\right] \\
		\label{NBF0}
		&=
	\left[\inf_{u\in H_0^1(\Omega)} \frac{\langle\bc{A}u,u\rangle}{\langle\bc{B}u,u\rangle},\
		\sup_{u\in H_0^1(\Omega)} \frac{\langle\bc{A}u,u\rangle}{\langle\bc{B}u,u\rangle}\right].
\end{align}
Moreover, the endpoints of this interval are contained in the spectrum.
It remains to bound
\begin{equation}\label{eq:values}
\frac{\langle\bc{A}u,u\rangle}{\langle\bc{B}u,u\rangle}
\end{equation}
in terms of the values of the scalar functions $g(x)$ and $k(x)$.
Let $\| \cdot \|$ denote the standard Euclidean norm. Then,
\begin{align}
\nonumber
\sup_{u\in H_0^1(\Omega)} \frac{\langle\bc{A}u,u\rangle}{\langle\bc{B}u,u\rangle}
	&= \sup_{u\in H_0^1(\Omega)} \frac{\int_{\Omega} k(x) \|\nabla u\|^2}
{\int_{\Omega} g(x)\|\nabla u\|^2}
       = \sup_{u\in H_0^1(\Omega)} \frac{\int_{\Omega} \frac{k(x)}{g(x)} g(x)\|\nabla u\|^2}
{\int_{\Omega} g(x)\|\nabla u\|^2} \\
\label{NBF0.1}
	&\leq \sup_{x\in\Omega}\frac{k(x)}{g(x)},
\end{align}
where we have used the assumption that $k(x)$ and $g(x)$ are uniformly positive and continuous on $\overline{\Omega}$.
Similarly,
\begin{align}
\label{NBF0.2}
\inf_{u\in H_0^1(\Omega)} \frac{\langle\bc{A}u,u\rangle}{\langle\bc{L}u,u\rangle}	
&\geq \inf_{x\in\Omega}\frac{k(x)}{g(x)}.
\end{align}
\item The proof of \textcolor{black}{the converse inclusion}
\[ \left[\inf_{x\in\overline{\Omega}}\ \frac{k(x)}{g(x)},\,\sup_{x\in\overline{\Omega}}\
\frac{k(x)}{g(x)}\right] \subset \mathrm{sp}(\mathcal{B}^{-1} \mathcal{A})\]
is similar to the proof of \cite[Theorem 3.1]{Nie09}.
\begin{itemize}
\item For an arbitrary $x_0 \in \Omega$, consider $\lambda_0 = k(x_0)/g(x_0)$.
\item Let $\{ v_{r} \}_{r \in \mathbb{R}_{+}}$ be a set of functions
satisfying\footnote{Note that no limit of $v_r$, as $r \rightarrow 0$,
  is needed in this proof.  Only the existence of a set of functions
  satisfying \req{NBF1} and \req{NBF2} is required.}
\begin{align}
\label{NBF1}
& {\rm supp}(v_{r}) \subset x_0 + U_{r} , \\
\label{NBF2}
& \| v_{r} \|_{\bc{B}} = 1 ,
\end{align}
%
%
where $U_{r} = \{ z \in \mathbb{R}^2 | \; \| z \| \leq r \}$.
\item  Next, let
\begin{equation}
\label{NBF3}
u_r = (\lambda_0 \bc{I} - \bc{B}^{-1}\bc{A})v_r,
\end{equation}
and observe that
\begin{align*}
\bc{B} u_r = (\lambda_0 \bc{B} - \bc{A})v_r.
\end{align*}
Consequently (see \eqref{eq:B}),
\begin{align*}
\|u_r\|_{\bc{B}}^2 &= \langle (\lambda_0 \bc{B} - \bc{A})v_r, u_r \rangle \\
&= \int_{x_0+U_r} \left( g(x)\lambda_0 - k(x) \right) \nabla v_r \cdot \nabla u_r \\
&\leq \sup_{x \in x_0+U_r} \left| g(x) \frac{k(x_0)}{g(x_0)} - k(x) \right| \, g_{\mathrm{min}}^{-1} \, |(v_r,u_r)_{\bc{B}}| \\
&\leq \sup_{x \in x_0+U_r} \left| g(x) \frac{k(x_0)}{g(x_0)} - k(x) \right| \,
g_{\mathrm{min}}^{-1} \, \| v_r \|_{\bc{B}} \| u_r \|_{\bc{B}},
\end{align*}
\textcolor{black}{where}
$$\textcolor{black}{g_{\mathrm{min}} = \min_{x \in x_0 + U_r} g(x).}$$
\textcolor{black}{Employing \eqref{NBF2},}
\begin{align*}
\|u_r\|_{\bc{B}} \leq \sup_{x \in x_0+U_r} \left| g(x) \frac{k(x_0)}{g(x_0)} - k(x) \right| \, g_{\mathrm{min}}^{-1},
\end{align*}
and from the continuity of $g(x)$ and $k(x)$ we conclude that
\begin{equation}
\label{NBF4}
\lim_{r\to0}\|u_r\|_{\bc{B}}=0.
\end{equation}
\item Assume that $\lambda_0 \notin \mathrm{sp}(\mathcal{B}^{-1} \mathcal{A})$.
Then $\lambda_0 \bc{I} - \bc{B}^{-1}\bc{A}$ has a bounded inverse, and \eqref{NBF3} and \eqref{NBF4}
imply that
\[
\|v_r\|_{\bc{B}} \leq \| (\lambda_0 \bc{I} - \bc{B}^{-1}\bc{A})^{-1} \|_{\bc{B}} \,  \|u_r\|_{\bc{B}}
\longrightarrow 0
\]
as $r \rightarrow 0$, which contradicts \eqref{NBF2}.  We conclude that
\[
\lambda_0 = \frac{k(x_0)}{g(x_0)} \in \mathrm{sp}(\mathcal{B}^{-1} \mathcal{A}).
\]
\item Since $x_0 \in \Omega$ was arbitrary in this argument, and $g$ and $k$ are continuous and uniformly positive on $\overline{\Omega}$, it follows that
\begin{equation}
\label{NBF5}
 \left( \inf_{x\in\overline{\Omega}}\ \frac{k(x)}{g(x)},\,\sup_{x\in\overline{\Omega}}\
\frac{k(x)}{g(x)}\right) \subset \mathrm{sp}(\mathcal{B}^{-1} \mathcal{A}).
\end{equation}
As mentioned above, according to the general \textcolor{black}{result} for self-adjoint operators, the endpoints of the interval in \eqref{NBF0} are contained in the spectrum of $\bc{B}^{-1} \bc{A}$. Combining this with \eqref{NBF0.1} and \eqref{NBF0.2} yield that also the endpoints of the interval \eqref{NBF5} belong to $\mathrm{sp}(\mathcal{B}^{-1} \mathcal{A})$.
\end{itemize}
\end{enumerate}

\subsection*{\textcolor{black}{Proof Theorem \ref{th:matrix}}.}
The proof of Theorem \ref{th:matrix} is analogous to the proof of~Theorem~3.1 in\cite{Ger19}.
As was explained in detail in \cite[Section 3.2]{Ger20_phd}, due to the use of the Hall's theorem for  bipartite graphs (see, e.g., \cite[Theorem 5.2]{Bondy1976}), it is sufficient to prove the statement formulated in the following lemma (cf.~\cite[Lemma 3.3]{Ger19}).

\begin{lemma}\label{th:lemma}
\textcolor{black}{Assume that $k(x)$ and $g(x)$ are uniformly positive, {bounded and piecewise continuous} functions,
and let the matrix $\alg{B}_n^{-1}\alg{A}_n$ be given by \eqref{eq:dis:A} and \eqref{eq:dis:B}.}
Let, moreover, $\bc{J}\subset\{1,\ldots,n\}$ and let $\bc{T}_\bc{J} := \bigcup_{j\in \bc{J}} \bc{T}_j$
be the union of the supports of the basis functions $\phi_j$, $j\in\bc{J}$.
Then there are at least $|\bc{J}|$ eigenvalues of the matrix $\alg{B}_n^{-1}\alg{A}_n$
located in the interval
\begin{equation}
\label{eq:int:ratio}
\left[{\inf_{ x\in\bc{T}_\bc{J}}} \frac{k(x)}{g(x)},{\sup_{ x\in\bc{T}_\bc{J}}}
\frac{k(x)}{g(x)}\right].
\end{equation}
\end{lemma}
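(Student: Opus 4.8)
The plan is to establish Lemma \ref{th:lemma} by a variational argument carried out in the inner product $(\cdot,\cdot)_{\bc{B}}$, adapting the proof of \cite[Lemma 3.3]{Ger19} (where $\bc{B}=\bc{L}$ and the ratio $k(x)/g(x)$ reduces to $k(x)$). I would first record the two ingredients on which everything rests. Since $k$ and $g$ are uniformly positive, both $\alg{A}_n$ and $\alg{B}_n$ are symmetric positive definite, so the pencil $(\alg{A}_n,\alg{B}_n)$ has $n$ real positive eigenvalues $\lambda_1\le\cdots\le\lambda_n$, the matrix $\alg{B}_n^{-1}\alg{A}_n$ is self-adjoint with respect to $(\cdot,\cdot)_{\bc{B}}$, and the $\lambda_i$ obey the Courant--Fischer min--max principle for the generalized Rayleigh quotient $\rho(v)=(v^{T}\alg{A}_n v)/(v^{T}\alg{B}_n v)$. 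The second ingredient is the locality identity: writing $u=\sum_j v_j\phi_j$ for the finite element function associated with a coefficient vector $v$, one has $v^{T}\alg{A}_n v=\int_\Omega k\,\|\nabla u\|^2$ and $v^{T}\alg{B}_n v=\int_\Omega g\,\|\nabla u\|^2$, so that $\rho(v)$ is a $g\|\nabla u\|^2$-weighted average of $k(x)/g(x)$; in particular, if $\mathrm{supp}(u)\subseteq\bc{T}_\bc{J}$ then $\rho(v)$ lies in the interval \eqref{eq:int:ratio}.

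I would then introduce the $|\bc{J}|$-dimensional coordinate subspace $W=\mathrm{span}\{\phi_j:j\in\bc{J}\}$, every element of which is supported in $\bc{T}_\bc{J}$. By the locality identity, $\rho(v)$ lies in \eqref{eq:int:ratio} for all nonzero $v\in W$; equivalently, the principal subpencil $(\alg{A}_\bc{J},\alg{B}_\bc{J})$ obtained from the rows and columns indexed by $\bc{J}$ has all $|\bc{J}|$ of its eigenvalues inside \eqref{eq:int:ratio}. Two one--sided bounds are then immediate from min--max applied to $W$: from $\lambda_{|\bc{J}|}\le\max_{v\in W}\rho(v)$ we obtain at least $|\bc{J}|$ eigenvalues of the full pencil not exceeding $\sup_{x\in\bc{T}_\bc{J}}k(x)/g(x)$, and from $\lambda_{n-|\bc{J}|+1}\ge\min_{v\in W}\rho(v)$ we obtain at least $|\bc{J}|$ eigenvalues not smaller than $\inf_{x\in\bc{T}_\bc{J}}k(x)/g(x)$.

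The main obstacle is to upgrade these two separate one--sided counts into a single two--sided count, that is, to show that at least $|\bc{J}|$ eigenvalues lie simultaneously in \eqref{eq:int:ratio}. This is the heart of the lemma, and it is not a formal consequence of the min--max principle or of Cauchy interlacing: in general, an $|\bc{J}|$-dimensional subspace on which the Rayleigh quotient is confined to a closed interval need not force $|\bc{J}|$ eigenvalues into that interval, because eigenvalues lying outside \eqref{eq:int:ratio} may accumulate on both sides of it. My plan is to count eigenvalues by inertia, ordering the basis so that $\bc{J}=\{1,\dots,|\bc{J}|\}$ and studying the leading principal minors of $\alg{A}_n-\lambda\alg{B}_n$: the $|\bc{J}|$-th minor equals $\det(\alg{A}_\bc{J}-\lambda\alg{B}_\bc{J})$ and changes sign exactly $|\bc{J}|$ times as $\lambda$ traverses \eqref{eq:int:ratio}. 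The crux, where the finite element structure must enter, is to control the remaining (tail) minors so that they cannot cancel these sign changes; this is precisely the point exploited in \cite[Lemma 3.3]{Ger19}, and it rests on the fact that the rows indexed by $\bc{J}$ involve the coefficients only through $(k/g-\lambda)\,g$ on $\bc{T}_\bc{J}$, where $k/g-\lambda$ has a definite sign at each endpoint of \eqref{eq:int:ratio}. I expect verifying this tail control to be the most delicate part of the argument. Once the lemma is established, it supplies the Hall condition for the bipartite graph described in the text, yielding the permutation $\pi$ of Theorem \ref{th:matrix}.
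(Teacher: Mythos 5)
Your setup and your diagnosis of the difficulty are both correct: the locality identity, the one-sided min--max counts, and, crucially, the observation that these do not combine into a two-sided count. Indeed, confinement of the Rayleigh quotient to $[a,b]$ on an $|\bc{J}|$-dimensional subspace does not force $|\bc{J}|$ eigenvalues into $[a,b]$ (take $\mathrm{diag}(1,3)$, $\alg{B}=\alg{I}$, and the subspace spanned by $(1,1)^T$: the quotient equals $2$, yet no eigenvalue is near $2$), and the two one-sided counts only guarantee $\max\{2|\bc{J}|-n,\,0\}$ eigenvalues in the interval. The problem is that your proposal stops at exactly this point. The inertia route you sketch --- Jacobi/Sturm counting of sign changes of the leading principal minors of $\alg{A}_n-\lambda\alg{B}_n$, with the block indexed by $\bc{J}$ placed first --- hinges on the claim that the ``tail'' minors cannot undo the $|\bc{J}|$ sign changes created by the $\bc{J}$-block. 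By Haynsworth's inertia additivity this amounts to showing that the negative inertia of the Schur complement of the $\bc{J}$-block does not decrease between the two endpoints of \eqref{eq:int:ratio}; this is false for general symmetric pencils, you give no argument for it in the finite element setting, and you explicitly leave it unverified. It is also not the argument of \cite[Lemma~3.3]{Ger19}, which you invoke as if it supplied this step: that proof uses no minors or inertia at all. So the heart of the lemma is missing.

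What actually closes the gap (both in \cite{Ger19} and in the present paper) is a switch from approximate eigenvectors of the true pencil to exact eigenvectors of a nearby pencil. Define the locally perturbed coefficient $\tilde{k}_{\bc{J}} = K\,g$ on $\bc{T}_{\bc{J}}$ and $\tilde{k}_{\bc{J}} = k$ elsewhere, as in \eqref{NBF6}, and let $\tilde{\alg{A}}_{\bc{J},n}$ be its stiffness matrix. Since each $\phi_j$ with $j\in\bc{J}$ has support inside $\bc{T}_{\bc{J}}$, the $j$-th columns of $\tilde{\alg{A}}_{\bc{J},n}$ and of $K\alg{B}_n$ coincide, i.e., $\tilde{\alg{A}}_{\bc{J},n}\alg{e}_j=K\alg{B}_n\alg{e}_j$, so $K$ is an eigenvalue of $\alg{B}_n^{-1}\tilde{\alg{A}}_{\bc{J},n}$ of multiplicity at least $|\bc{J}|$. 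A Weyl-type bound for the symmetrized pencils (see \cite[Corollary~4.9]{1990_SteSu_B}) then places at least $|\bc{J}|$ eigenvalues of $\alg{B}_n^{-1}\alg{A}_n$ within $\Theta$ of $K$, where $\Theta$ is the spectral radius of the perturbation $\alg{B}_n^{-1}(\alg{A}_n-\tilde{\alg{A}}_{\bc{J},n})$. Because the coefficient perturbation is supported only in $\bc{T}_{\bc{J}}$, your own locality identity bounds every Rayleigh quotient of this perturbation in absolute value by $\sup_{x\in\bc{T}_{\bc{J}}}|k(x)/g(x)-K|$, and choosing $K$ to be the midpoint of \eqref{eq:int:ratio} makes $\Theta$ at most half its width, which is the assertion of the lemma. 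So your ingredients (locality plus $\bc{B}$-self-adjointness) are the right ones; the missing idea is the local perturbation $k\mapsto Kg$ that turns the coordinate subspace into an exact invariant subspace with a single eigenvalue.
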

\begin{proof}
Following the proof of~Lemma~3.3 in~\cite{Ger19}, consider, for any set \textcolor{black}{of} indices
$\bc{J}\subset \{1,\ldots,n\}$, the (local) perturbation $\tilde{k}_\bc{J}( x) $
of the coefficient function $k(x)$,
\begin{align}
\label{NBF6}
\tilde{k}_\bc{J}( x) &=
	\begin{cases}
		K\cdot g(x) \qquad 	\mbox{for} \quad  x\in\bc{T}_{\bc{J}}, \\
		k( x) \quad \mbox{elsewhere},
	\end{cases}
\end{align}
\textcolor{black}{where $K$ is a positive scalar.}
Analogously to \eqref{eq:dis:A}, the matrix $\tilde{\alg{A}}_{\bc{J},n}$ obtained
by the discretization of the associated perturbed operator $\tilde{\bc{A}}_{\bc{J}}$ is given by
\begin{align*}
[\tilde{\alg{A}}_{\bc{J},n}]_{lj} &=
	\dual{\tilde{\bc{A}}_{\bc{J},h}\phi_j}{\phi_l} =
	\int_{\Omega} \tilde{k}_{\bc{J}}(x) \nabla \phi_j\cdot \nabla\phi_l\,.
\end{align*}
\textcolor{black}{The} simple observation
\[\tilde{\alg{A}}_{\bc{J},n}\alg{e}_j = K\,\alg{B}_n \alg{e}_j,\quad j \in{\bc{J}},\]
shows that $K$ is an eigenvalue of the matrix $\alg{B}_n^{-1}\tilde{\alg{A}}_{\bc{J},n}$
with \textcolor{black}{multiplicity of at least $| \bc{J} |$.}

By similarity transformations, the spectrum of $\alg{B}_n^{-1}\alg{A}_n$ \textcolor{black}{equals}
the spectrum of $\alg{B}_n^{-1/2}\alg{A}_n \alg{B}_n^{-1/2}$, and the spectrum
of $\alg{B}_n^{-1}\tilde{\alg{A}}_{\bc{J},n}$ is equal to the spectrum of
$\alg{B}_n^{-1/2}\tilde{\alg{A}}_{\bc{J},n}\alg{B}_n^{-1/2}$.
Using the standard perturbation result for symmetric matrices
(see, e.g., \cite[Corollary~4.9, p.~203]{1990_SteSu_B}),  there are at least
$|\bc{J}|$ eigenvalues  of $\alg{B}_n^{-1}\alg{A}_n$ in the interval
\begin{equation}
\label{eq:3:bound}
[ K + \theta_{min}, K + \theta_{max} ] \subset  [K - \Theta, K + \Theta ],
\end{equation}
where  $\Theta = \max\{|\theta_{min}|,|\theta_{max}|\}$ and $\theta_{min}$ and
$\theta_{max}$ denote the smallest and largest eigenvalues\textcolor{black}{, respectively,} of the perturbation matrix
$\alg{B}_n^{-1} (\alg{A}_n - \tilde{\alg{A}}_{\bc{J},n})$.

The Rayleigh quotient for {an} eigenvalue-eigenvector pair $(\theta,\alg{q})$ 
of \textcolor{black}{the perturbation} matrix, with the associated eigenfunction
$q = \sum_{j=1}^N\nu_j\phi_j, \, \alg{q}^T = [\nu_1,\ldots,\nu_N]$, satisfies
\begin{align*}
\theta &= \frac{\alg{q}^T(\alg{A}_n - \tilde{\alg{A}}_{\bc{J},n})\alg{q}}{\alg{q}^T\alg{B}_n\alg{q}}
		= \frac{\langle ( \bc{A} - \tilde{\bc{A}}_{\bc{J}})q, q \rangle}{\langle \bc{B}q, q \rangle}
		\\
	   &= \frac{\int_{\Omega}  (k( x) - \tilde{k}_{\bc{J}}( x))\nabla q \cdot\nabla q\, d x }
	   			{\int_{\Omega} g(x)\|\nabla q\|^2\, d x}
		= \frac{\int_{\bc{T}_{\bc{J}}} (k( x) -K g(x))\|\nabla q\|^2\, d x }
				{\int_{\Omega} g(x)\|\nabla q\|^2\, d x} \\
		&= \frac{\int_{\bc{T}_{\bc{J}}} \left(\frac{k( x)}{g(x)} -K\right) g(x)\|\nabla q\|^2\, d x }
				{\int_{\Omega} g(x)\|\nabla q\|^2\, d x}, 		
\end{align*}
where we used the fact that $\tilde{k}_{\bc{J}}( x) = k( x)$ for
$x \in \Omega \setminus \bc{T}_{\bc{J}}$; see \eqref{NBF6}.
Using 
the uniform positivity of $g(x)$,
\begin{equation}
\label{BF1}
|\theta| \leq {\sup_{ x\in\bc{T}_{\bc{J}}}}\left|\frac{k( x)}{g(x)} - K\right|.
\end{equation}
Substituting \eqref{BF1} into \eqref{eq:3:bound} yields the existence of at least
$|\bc{J}|$ eigenvalues of $\alg{B}_{\textcolor{black}{n}}^{-1}\alg{A}_{\textcolor{black}{n}}$ in the interval
\begin{equation}\label{eq:K:bound}
	\left[\, K -   {\sup_{ x\in\bc{T}_{\bc{J}}}}\left|\frac{k( x)}{g(x)} - K\right|,\,
		K +   {\sup_{ x\in\bc{T}_{\bc{J}}}}\left|\frac{k( x)}{g(x)} - K\right|
	\,\right].
\end{equation}
Setting
$
K = \tfrac{1}{2}\left(\inf_{ x\in\bc{T}_{\bc{J}}}\frac{k( x)}{g(x)} +
				 \sup_{ x\in\bc{T}_{\bc{J}}}\frac{k( x)}{g(x)}\right)
$
finishes the proof.
\end{proof}

\section{Abstract setting.}
\label{Abstract Galerkin discretization}
This section investigates numerical approximations of the spectrum of preconditioned linear operators
within an abstract Hilbert space setting; see, e.g.,~\cite{MSB15,HPS20}. Let $V$ be an infinite dimensional
real Hilbert space with the inner product
\begin{equation}\label{eq:iproduct}
( \cdot,\cdot): V\times V \mapsto \nmbr{R}.
\end{equation}
Throughout this text, $V^{\#}$  denotes the dual of $V$ consisting of all linear bounded functionals from $V$ to $\nmbr{R}$, with
the associated duality pairing
$$\langle\cdot,\cdot\rangle: V^\#\times V \mapsto \nmbr{R},$$
and the Riesz map
$$\langle\cdot,\cdot\rangle =: (\tau \cdot, \cdot), \quad \tau:  V^\# \mapsto V. $$

Consider two bounded and coercive linear operators $\bc{A}, \bc{B}: V \mapsto V^{\#}$ that are self-adjoint
with respect to the duality pairing. We will investigate whether all the points in the spectrum
of the preconditioned operator $\bc{B}^{-1}\bc{A}: V\mapsto V$ are approximated to an arbitrary
accuracy by the eigenvalues of the finite dimensional operators in  a sequence $\{\bc{B}_n^{-1}\bc{A}_n\}$
determined via the Galerkin discretization.

\textcolor{black}{For the problems considered in sections \ref{Preconditioning by Laplacian} and \ref{Preconditioning by a general B}, we obtained concrete expressions for the approximations of the spectrum of $\bc{B}^{-1}\bc{A}$ in terms of the coefficient functions $g(x)$ and $k(x)$. Such detailed information is, of course, not obtainable for the present abstract problem setting. We must be content with analyzing whether, or in what sense, the set of eigenvalues of the discretized mapping converges toward the spectrum of the corresponding infinite dimensional operator.}

Since $\bc{B}^{-1}\bc{A}$ is self-adjoint with respect to the inner product
\begin{equation}\label{eq:iproductB}
( \cdot,\cdot)_{\bc{B}}:=  \langle \bc{B} \, \cdot, \cdot \rangle : V\times V \mapsto \nmbr{R},
\end{equation}
it is convenient to use 
this inner product instead of~(\ref{eq:iproduct})
whenever appropriate\footnote{Since $\bc{B}^{-1}\bc{A} = (\tau \bc{B})^{-1} (\tau \bc{A})$, one can
as an alternative investigate approximations of the spectrum of the symmetrized operator
$(\tau\bc{B})^{-1/2} \tau \bc{A} (\tau\bc{B})^{-1/2}$ that is self-adjoint with respect to
the inner product~(\ref{eq:iproduct}).}. The associated norm $\| \cdot\|_{\bc{B}}$  is equivalent
to the norm $\| \cdot \|$ defined by the inner product~(\ref{eq:iproduct}), and the Riesz map $\tau_{\bc{B}}$
representing the operator preconditioning is determined by
\begin{equation}\nonumber
\langle\cdot,\cdot\rangle =: (\tau_{\bc{B}} \, \cdot, \cdot)_{\bc{B}} =
\langle \bc{B}\, \tau_{\bc{B}}\, \cdot, \cdot \rangle, \quad \mbox{i.e.,}
\; \tau_{\bc{B}} = \bc{B}^{-1}.
\end{equation}

The investigated preconditioned operator $\bc{B}^{-1}\bc{A}$ is continuously invertible
on the Hilbert space $V$ of infinite dimension. Therefore its finite dimensional approximations
can not converge to it in norm (uniformly). We will instead use Theorem \ref{th:Kato_(Chatelin)} (below)
that assumes the pointwise (strong) convergence. Its statement reformulates a theorem
presented in~\cite[chapter VIII, \S~1.2, Theorem~1.14, p.~431]{1980Kato_Book}, which is
reproduced also in~\cite[section~5.4, Theorem~5.12, pp.~239-240]{1983Chatelin_Book}.
The second monograph also provides several references to related results of
J.~Descloux and collaborators published earlier; see, in particular,~\cite{DesNasRap78}.
In terms of the spectral representation of self-adjoint operators,
a bit stronger statements were proved in the context of the problem of moments
in~\cite[section~III.3, Theorem IX, p.~61]{VorB65} and, more generally,
in~\cite[chapter VIII, \S~1.2, Theorem~1.15, p.~432]{1980Kato_Book}.
The formulations in~\cite{1980Kato_Book} and~\cite{1983Chatelin_Book} require a careful study
of parts of the books. We will therefore, for the sake of convenience, include
 a proof of the following theorem in the Appendix.

\begin{theorem}[Approximation of the spectrum of self-adjoint operators]
\label{th:Kato_(Chatelin)}
Let $\bc{Z}$ be a linear self-adjoint operator on a Hilbert space $V$
and let $\{\bc{Z}_n\}$ be a sequence of linear self-adjoint operators on $V$
converging to $\bc{Z}$ pointwise (strongly). Then, for any point $\lambda \in \mathrm{sp}(\bc{Z})$
in the spectrum of $\bc{Z}$, and for any of its neighbourhoods, there exists $\bc{Z}_j$
such that the intersection of its spectrum $\mathrm{sp}(\bc{Z}_j)$ with this neighbourhood
is nonempty.
\end{theorem}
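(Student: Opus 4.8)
The plan is to argue by contradiction, using the approximate-eigenvector characterization of the spectrum of a self-adjoint operator (Weyl's criterion) together with the strong convergence hypothesis. Since $\bc{Z}$ is self-adjoint, the spectral point $\lambda \in \mathrm{sp}(\bc{Z})$ is real, and the spectral theorem guarantees the existence of approximate eigenvectors: for every $\delta > 0$ there is a unit vector $v \in V$, $\|v\| = 1$, with $\|(\bc{Z} - \lambda \bc{I})v\| < \delta$. Indeed, were no such $v$ to exist for some $\delta$, then $\|(\bc{Z} - \lambda \bc{I})w\| \geq \delta \|w\|$ would hold for all $w \in V$, which for a self-adjoint operator forces $\lambda$ into the resolvent set and contradicts $\lambda \in \mathrm{sp}(\bc{Z})$.

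Next I would fix a neighbourhood $N$ of $\lambda$ and suppose, contrary to the claim, that $\mathrm{sp}(\bc{Z}_n) \cap N = \emptyset$ for every $n$. Since $N$ is a neighbourhood of $\lambda$, it contains an open ball $U = \{\, z : |z - \lambda| < \varepsilon \,\}$ for some $\varepsilon > 0$, whence $\mathrm{sp}(\bc{Z}_n) \cap U = \emptyset$ for all $n$ as well. Because each $\bc{Z}_n$ is self-adjoint, this gives $\mathrm{dist}(\lambda, \mathrm{sp}(\bc{Z}_n)) \geq \varepsilon$, and the spectral theorem (equivalently, the identity $\|(\bc{Z}_n - \lambda\bc{I})^{-1}\| = \mathrm{dist}(\lambda, \mathrm{sp}(\bc{Z}_n))^{-1}$) yields the \emph{uniform} lower bound
\[
\|(\bc{Z}_n - \lambda \bc{I})w\| \geq \varepsilon \|w\| \qquad \text{for all } w \in V \text{ and all } n.
\]

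The final step combines these two facts. Fix one approximate eigenvector $v$ with $\|v\| = 1$ and $\|(\bc{Z} - \lambda\bc{I})v\| < \varepsilon/2$. For this fixed $v$, the pointwise convergence $\bc{Z}_n \to \bc{Z}$ gives $\|(\bc{Z}_n - \bc{Z})v\| \to 0$, so there is an index $j$ with $\|(\bc{Z}_j - \bc{Z})v\| < \varepsilon/2$. The triangle inequality then produces
\[
\|(\bc{Z}_j - \lambda\bc{I})v\| \leq \|(\bc{Z} - \lambda\bc{I})v\| + \|(\bc{Z}_j - \bc{Z})v\| < \tfrac{\varepsilon}{2} + \tfrac{\varepsilon}{2} = \varepsilon,
\]
which contradicts the uniform lower bound $\|(\bc{Z}_j - \lambda\bc{I})v\| \geq \varepsilon$. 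Hence the assumption was false, and some $\bc{Z}_j$ must have $\mathrm{sp}(\bc{Z}_j) \cap N \neq \emptyset$, as claimed.

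I expect the only genuine subtlety to lie in the two invocations of the spectral theorem for self-adjoint operators: the existence of approximate eigenvectors at a spectral point, and the identification of the distance to the spectrum with the reciprocal resolvent norm (the uniform lower bound $\|(\bc{Z}_n - \lambda\bc{I})w\| \geq \mathrm{dist}(\lambda, \mathrm{sp}(\bc{Z}_n))\|w\|$). Both are standard, but they are precisely where self-adjointness is indispensable: without it, $\lambda$ lying at distance $\varepsilon$ from $\mathrm{sp}(\bc{Z}_n)$ would not by itself furnish a quantitative lower bound on $\|(\bc{Z}_n - \lambda\bc{I})w\|$, and the contradiction would collapse.
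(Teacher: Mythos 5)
Your proof is correct, and it takes a genuinely different---and more elementary---route than the paper's. The paper stays entirely on the resolvent side: it moves to the complex point $\mu = \lambda + \iota\varepsilon$, invokes the self-adjointness identity $\|(\mu\bc{I}-\bc{Z})^{-1}\| = \mathrm{dist}(\mu,\mathrm{sp}(\bc{Z}))^{-1}$ to get $\|\bc{R}(\mu)\| = 1/\varepsilon$ together with the uniform bound $\|\bc{R}_n(\mu)\| \leq 1/\varepsilon$, concludes that $\mu\bc{I} - \bc{Z}_n \stackrel{s}{\rightarrow} \mu\bc{I} - \bc{Z}$ stably, deduces pointwise convergence of the resolvents via the resolvent identity, and finally evaluates $\bc{R}_n(\mu)$ at a well-chosen unit vector $x$ with $\|\bc{R}(\mu)x\| \geq 1/(2\varepsilon)$ to force $\mathrm{dist}(\mu,\mathrm{sp}(\bc{Z}_n)) \leq 3\varepsilon$, hence a spectral point of $\bc{Z}_n$ within $4\varepsilon$ of $\lambda$. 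You instead argue by contradiction on the operator side: Weyl's criterion supplies an approximate eigenvector of $\bc{Z}$ at $\lambda$, the assumed spectral gap of every $\bc{Z}_n$ around $\lambda$ yields the uniform lower bound $\|(\bc{Z}_n - \lambda\bc{I})w\| \geq \varepsilon\|w\|$, and pointwise convergence tested at the single vector $v$ plus the triangle inequality closes the contradiction. The self-adjointness input is ultimately the same in both arguments---the identity between the resolvent norm and the reciprocal of the distance to the spectrum (your Weyl criterion for $\bc{Z}$ is this identity read contrapositively, and your justification via ``bounded below $\Rightarrow$ injective with closed range, and self-adjointness $\Rightarrow$ dense range'' is the right one)---but you dispense entirely with the complex shift, the notion of stable convergence, and the pointwise convergence of resolvents. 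What you lose is the connection to the Kato--Chatelin convergence framework that the paper's appendix develops anyway; what you gain is a shorter, self-contained argument that also incidentally gives a slightly sharper quantitative conclusion (a spectral point of some $\bc{Z}_j$ within $\varepsilon$ of $\lambda$, rather than $4\varepsilon$), and that avoids any implicit complexification of the real Hilbert space $V$.
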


Using this theorem and the Hilbert space $V$ equipped with the inner product~(\ref{eq:iproductB}), it remains, within our setting, to prove that the self-adjoint operators $\bc{Z}_n$,
which arise from $\bc{B}_n^{-1}\bc{A}_n$ by extending it to the whole space $V$, converge
pointwise to the original self-adjoint operator
$$\bc{Z} := \bc{B}^{-1}\bc{A}.$$
The discretization will be based on a sequence of subspaces $\{V_n\}, \, V_n \subset V,$
satisfying the  approximation
property~(\ref{eq:approximation_property_of subspaces})\footnote{In the limit equal to zero, i.e.,
it does not matter which of the equivalent norms, $\| \cdot\|_{\bc{B}}$ or $\| \cdot\|$, we use.}
\begin{equation}
\label{NBF7}
\lim_{n \rightarrow \infty} \inf_{v \in V_n} \| w - v \| = 0   \quad \mbox{for all} \;
 w \in V,
\end{equation}
see, e.g., \cite[relation~(8)]{ArnFalWin10}. Note that \eqref{NBF7} typically yields that Galerkin discretizations of
boundary value problems are consistent;
see also~\cite[chapter~9, relation~(9.8)]{MSB15}.

Consider a  basis $\Phi_n = (\phi_1^{(n)}, \dots , \phi_n^{(n)})$
of the $n$-dimensional subspace $V_n \subset V$. Then the Galerkin discretizations
$\bc{A}_n$ and $\bc{B}_n$ of the operators $\bc{A}$ and $\bc{B}$ are determined by (see~\cite[section~4.1]{HPS20} and \cite[chapter~6]{MSB15}),
\begin{equation}\label{eq:discretized_operators}
\langle \mathcal{A}_n w, v \rangle :=  \langle \mathcal{A} w, v \rangle \quad
\mbox{and} \quad
\langle \mathcal{B}_n w, v \rangle :=  \langle \mathcal{B} w, v \rangle,
\quad \mbox{for all} \; w, v \in V_n.
\end{equation}
Their matrix representations are given by
\begin{equation}\label{eq:matrix_A}
\mathbf{A}_n =
\left( \langle \mathcal{A} \phi_j^{(n)} , \phi_i^{(n)} \rangle \right)_{i,j = 1, \dots , n},
\end{equation}
and
\begin{equation}\label{eq:matrix_B}
\mathbf{B}_n =
\left( \langle \mathcal{B} \phi_j^{(n)} , \phi_i^{(n)} \rangle \right)_{i,j = 1, \dots , n}.
\end{equation}

The spectrum of the discretized operator $\bc{B}_n^{-1}\bc{A}_n: V_n \mapsto V_n$
is given by the eigenvalues of its matrix representation $\mathbf{B}_n^{-1}\mathbf{A}_n$.
The operator $\bc{B}_n^{-1}\bc{A}_n$ is self-adjoint with respect to the inner
product~(\ref{eq:iproductB}), and the matrix $\mathbf{B}_n^{-1}\mathbf{A}_n$ is self-adjoint
with respect to the algebraic inner product
$(\mathbf{x}, \mathbf{y})_{\mathbf{B}_n} := \mathbf{y}^* \mathbf{B}_n \mathbf{x}$.

Using the orthogonal projection $$\Pi_{\bc{B}}^n: V \mapsto V_n,$$ where the orthogonality is
determined by the inner product~(\ref{eq:iproductB}), $\bc{B}_n^{-1}\bc{A}_n$ is extended
to the whole space $V$,
\begin{equation}\label{eq:extended_fp_operator}
\bc{Z}_n := \bc{B}_n^{-1}\bc{A}_n \, \Pi_{\bc{B}}^n : V \mapsto V_n \subset V.
\end{equation}

We need to show that $\bc{Z}_n$ is self-adjoint with respect to the inner
product~(\ref{eq:iproductB}). Using, for any $u, w \in V$, the associated orthogonal decompositions
$u = \Pi_{\bc{B}}^n u + u^{\perp}$ and $w = \Pi_{\bc{B}}^n w + w^{\perp}$, we can write
\begin{align*}
\langle \bc{B} \bc{Z}_n u, w \rangle
  &= \langle \bc{B} \bc{B}_n^{-1}\bc{A}_n \, \Pi_{\bc{B}}^n u, w \rangle
   = \langle \bc{B} \bc{B}_n^{-1}\bc{A}_n \, \Pi_{\bc{B}}^n u, \Pi_{\bc{B}}^n w \rangle
       + \langle \bc{B} \bc{B}_n^{-1}\bc{A}_n \, \Pi_{\bc{B}}^n u,  w^{\perp} \rangle \\
  &= \langle \bc{B}_n \bc{B}_n^{-1}\bc{A}_n \, \Pi_{\bc{B}}^n u, \Pi_{\bc{B}}^n w \rangle
   = \langle \bc{A}_n \, \Pi_{\bc{B}}^n u, \Pi_{\bc{B}}^n w \rangle
   = \langle \bc{A}_n \, \Pi_{\bc{B}}^n w, \Pi_{\bc{B}}^n u \rangle \\
  &= \langle \bc{B}_n \bc{B}_n^{-1}\bc{A}_n \, \Pi_{\bc{B}}^n w, \Pi_{\bc{B}}^n u \rangle
   = \langle \bc{B} \bc{B}_n^{-1}\bc{A}_n \, \Pi_{\bc{B}}^n w, \Pi_{\bc{B}}^n u \rangle \\
  &= \langle \bc{B} \bc{B}_n^{-1}\bc{A}_n \, \Pi_{\bc{B}}^n w, \Pi_{\bc{B}}^n u \rangle
       + \langle \bc{B} \bc{B}_n^{-1}\bc{A}_n \, \Pi_{\bc{B}}^n w,  u^{\perp} \rangle  \\
  &= \langle \bc{B} \bc{B}_n^{-1}\bc{A}_n \, \Pi_{\bc{B}}^n w, u \rangle
   = \langle \bc{B} \bc{Z}_n w, u \rangle,
\end{align*}
which gives the self-adjointness.

Summarizing, the sequence of subspaces $\{V_n\}$ determines 
a sequence of self-adjoint operators  $\{\bc{Z}_n\}$ defined on the whole space $V$. The dimension of the ranges of these operators is finite, but increases as $n$ increases. It remains to prove that $\{\bc{Z}_n\}$ converges pointwise to $\bc{Z}$.
\begin{theorem}[Pointwise convergence]
\label{th:pointwise_convergence}
Let $\bc{Z} = \bc{B}^{-1}\bc{A}$ be a linear self-adjoint operator on a Hilbert space $V$,
where $\bc{A}, \bc{B}: V \mapsto V^{\#}$ are bounded and coercive linear operators  that are self-adjoint
with respect to the duality pairing,
and let $\{\bc{Z}_n\}$ be the sequence of linear self-adjoint operators 
defined in~(\ref{eq:extended_fp_operator}). Assume that the sequence of subspaces $\{V_n\}$
satisfy the approximation property~\eqref{NBF7}. 
Then the sequence $\{\bc{Z}_n\}$ converges pointwise (strongly) to $\bc{Z}$, i.e.,
for all $w \in V$
\begin{equation*}
\lim_{n \rightarrow \infty} \|  \bc{Z} w - \bc{Z}_n w \| = 0.
\end{equation*}
\end{theorem}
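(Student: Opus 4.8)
The plan is to identify the extended operator $\bc{Z}_n$ with the $\bc{B}$-orthogonal compression of $\bc{Z}$ to $V_n$, and then run the standard telescoping argument for compressions of a bounded operator under a sequence of projections that converge strongly to the identity. No uniform (normwise) convergence or compactness will be needed; the whole point is that pointwise convergence survives compression.

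First I would show that $\bc{Z}_n = \Pi_{\bc{B}}^n \, \bc{Z} \, \Pi_{\bc{B}}^n$ as operators on $V$. Fix $w \in V$ and set $w_n = \Pi_{\bc{B}}^n w \in V_n$ and $z_n = \bc{B}_n^{-1}\bc{A}_n w_n \in V_n$. By the definition of $\bc{A}_n$ and $\bc{B}_n$ in~\eqref{eq:discretized_operators}, the element $z_n$ is characterized by $\langle \bc{B} z_n, v\rangle = \langle \bc{A} w_n, v\rangle$ for all $v \in V_n$. Rewriting both sides via the inner product~\eqref{eq:iproductB} and using $\langle \bc{A} w_n, v\rangle = \langle \bc{B}\,\bc{B}^{-1}\bc{A} w_n, v\rangle = (\bc{Z} w_n, v)_{\bc{B}}$ turns this into $(z_n, v)_{\bc{B}} = (\bc{Z} w_n, v)_{\bc{B}}$ for all $v \in V_n$. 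By the defining property of the $\bc{B}$-orthogonal projection this says exactly $z_n = \Pi_{\bc{B}}^n \bc{Z} w_n$, so that $\bc{Z}_n w = \bc{B}_n^{-1}\bc{A}_n \Pi_{\bc{B}}^n w = \Pi_{\bc{B}}^n \bc{Z}\Pi_{\bc{B}}^n w$.

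Next I would record two facts. The operator $\bc{Z} = \bc{B}^{-1}\bc{A}$ is bounded on $V$, since $\bc{A}$ is bounded and $\bc{B}$ is coercive (hence $\bc{B}^{-1}$ is bounded); write $M = \|\bc{Z}\|_{\bc{B}}$ for its operator norm with respect to $\|\cdot\|_{\bc{B}}$. Moreover, because $\Pi_{\bc{B}}^n$ realizes the best approximation in the $\bc{B}$-norm, one has $\|w - \Pi_{\bc{B}}^n w\|_{\bc{B}} = \inf_{v\in V_n}\|w - v\|_{\bc{B}}$, and by the equivalence of $\|\cdot\|_{\bc{B}}$ with $\|\cdot\|$ this infimum is controlled by $\inf_{v\in V_n}\|w - v\|$, which vanishes by the approximation property~\eqref{NBF7}. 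Thus $\Pi_{\bc{B}}^n \to \bc{I}$ pointwise. The crucial quantitative input is that, being an orthogonal projection in the $\bc{B}$-inner product, $\Pi_{\bc{B}}^n$ satisfies $\|\Pi_{\bc{B}}^n\|_{\bc{B}} \le 1$ uniformly in $n$.

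Then the result follows from the telescoping decomposition
$$\bc{Z} w - \bc{Z}_n w = (\bc{I} - \Pi_{\bc{B}}^n)\bc{Z} w + \Pi_{\bc{B}}^n \bc{Z}\,(\bc{I} - \Pi_{\bc{B}}^n) w.$$
The first term tends to zero because $\Pi_{\bc{B}}^n \to \bc{I}$ pointwise, applied to the fixed vector $\bc{Z} w$. The second term is bounded by $\|\Pi_{\bc{B}}^n\|_{\bc{B}}\, M\, \|(\bc{I}-\Pi_{\bc{B}}^n)w\|_{\bc{B}} \le M\,\|(\bc{I}-\Pi_{\bc{B}}^n)w\|_{\bc{B}} \to 0$. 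Hence $\|\bc{Z} w - \bc{Z}_n w\|_{\bc{B}} \to 0$, and by the equivalence of norms $\|\bc{Z} w - \bc{Z}_n w\| \to 0$ for every $w \in V$. I expect the main obstacle to be the first step, namely correctly recognizing $\bc{B}_n^{-1}\bc{A}_n \Pi_{\bc{B}}^n$ as the compression $\Pi_{\bc{B}}^n \bc{Z}\Pi_{\bc{B}}^n$; once this algebraic identity and the uniform bound $\|\Pi_{\bc{B}}^n\|_{\bc{B}}\le 1$ are in hand, the remaining estimates are routine but essential uses of strong convergence of the projections.
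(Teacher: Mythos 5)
Your proof is correct, but it takes a genuinely different route from the paper's. The compression identity $\bc{Z}_n = \Pi_{\bc{B}}^n \, \bc{Z} \, \Pi_{\bc{B}}^n$ that you establish is implicitly contained in the paper's verification that $\widehat{\bc{Z}_n} = \bc{B}_n^{-1}\bc{A}_n$, i.e., $(\bc{Z}_n u, v)_{\bc{B}} = (\bc{Z}u, v)_{\bc{B}}$ for all $u,v \in V_n$, but the paper never exploits this structure directly. Instead it fixes $f := \bc{Z}w$, introduces the Galerkin solution $w_n \in V_n$ of $(\bc{Z}w_n, v)_{\bc{B}} = (f,v)_{\bc{B}}$ for all $v \in V_n$ --- equivalently $\langle \bc{A}w_n, v\rangle = \langle \bc{A}w, v\rangle$, so the paper's $w_n$ is the $\bc{A}$-Ritz projection of $w$, \emph{not} your $\Pi_{\bc{B}}^n w$ --- derives $\bc{Z}_n w_n = f_n := \Pi_{\bc{B}}^n f$, and uses the decomposition $\bc{Z}w - \bc{Z}_n w = (f - f_n) + \bc{Z}_n(w_n - w)$ together with C\'{e}a's lemma, $\|w - w_n\| \leq \kappa(\bc{A}) \inf_{v \in V_n}\|w - v\|$ with $\kappa(\bc{A}) = C_{\bc{A}}/\alpha_{\bc{A}}$, plus uniform boundedness of $\{\bc{Z}_n\}$. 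Your telescoping $\bc{Z}w - \bc{Z}_n w = (\bc{I} - \Pi_{\bc{B}}^n)\bc{Z}w + \Pi_{\bc{B}}^n \bc{Z}(\bc{I} - \Pi_{\bc{B}}^n)w$ replaces both ingredients by the single elementary fact $\|\Pi_{\bc{B}}^n\|_{\bc{B}} \leq 1$ combined with the strong convergence $\Pi_{\bc{B}}^n \rightarrow \bc{I}$; it also substantiates, via $\|\bc{Z}_n\|_{\bc{B}} \leq \|\bc{Z}\|_{\bc{B}}$, the paper's somewhat informal claim that the norm of $\bc{Z}_n$ is ``bounded independently of $n$'', and it gives the self-adjointness of $\bc{Z}_n$ in one line, where the paper uses a longer computation. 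What your route buys: nowhere do you use the coercivity of $\bc{A}$ or the self-adjointness of any operator --- only boundedness of $\bc{A}$ and boundedness plus coercivity of $\bc{B}$ --- so you actually prove a strictly more general pointwise-convergence statement, directly relevant to the open question in the paper's conclusions about non-coercive (e.g., saddle-point) operators $\bc{A}$; self-adjointness is of course still needed afterwards, in Theorem~\ref{th:Kato_(Chatelin)}, to obtain the spectral approximation corollary. What the paper's route buys: it stays inside the Galerkin/C\'{e}a framework familiar to its audience and exhibits an explicit quasi-optimality constant $\kappa(\bc{A})$ for the approximation $w_n$, quantitative information that your softer operator-theoretic argument does not provide.
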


\begin{proof}
Let $w \in V$ be an arbitrary fixed element and define $$f := \bc{Z} w.$$ Consider a finite
dimensional subspace $V_n \subset V$  and the Galerkin discretization of the equation $\bc{Z} w = f$,
using 
the innerproduct $(\cdot,\cdot)_{\bc{B}}$: Find $w_n \in V_n$ such that
\[
(\bc{Z} w_n, v)_{\bc{B}} = (f, v)_{\bc{B}} \quad \mbox{for all } v \in V_n.
\]

This gives, for all $v \in V_n$,
\begin{align}
\nonumber
0  &= (f - \bc{Z} w_n, v)_{\bc{B}} = (f_n - \bc{Z} w_n, v)_{\bc{B}} + (f - f_n, v)_{\bc{B}} \\
\label{NBF8}
   &= (f_n - \widehat{\bc{Z}_n} w_n, v)_{\bc{B}},
\end{align}
where we used the definition $f_n=\Pi_{\bc{B}}^n f$ for the discretized right hand side, i.e.,
$$(f_n - f, v)_{\bc{B}} = 0 \quad \mbox{for all} \; v \in V_n ,$$
giving
$$\| f - f_n\|_{\bc{B}} = \inf_{g \in V_n}\| f - g\|_{\bc{B}}.$$
The equivalence of the norms induced by the innerproducts~(\ref{eq:iproduct}) and
(\ref{eq:iproductB}) and the approximation property~\eqref{NBF7} 
then assure that
\begin{equation}\label{eq:rhs_convergence}
\lim_{n \rightarrow \infty} \| f - f_n \| = 0 \,.
\end{equation}
The discretized operator $\widehat{\bc{Z}_n}$ is determined by
$$
(\widehat{\bc{Z}_n} u, v)_{\bc{B}} :=(\bc{Z} u, v)_{\bc{B}} \quad
\mbox{for all} \; u, v \in V_n ,
$$
and it is easy to verify that $\widehat{\bc{Z}_n} = \bc{B}_n^{-1}\bc{A}_n$. Indeed,
for all $u, v \in V_n$,
\begin{align*}
(\widehat{\bc{Z}_n} u, v)_{\bc{B}} &=
(\bc{B}^{-1}\bc{A} u, v)_{\bc{B}} \\
  &= \langle \bc{A} u, v \rangle \\
  &= \langle \bc{A}_n u, v \rangle \\
  &= \langle \bc{B}_n \bc{B}_n^{-1} \bc{A}_n  u, v \rangle \\
  &= \langle \bc{B} \bc{B}_n^{-1} \bc{A}_n u, v \rangle \\
  &=  (\bc{B}_n^{-1} \bc{A}_n u, v )_{\bc{B}}.
\end{align*}
The previous considerations remain valid when replacing
$\widehat{\bc{Z}_n}$ by its extension $\bc{Z}_n$ to the whole space $V$ given
in~(\ref{eq:extended_fp_operator}) because
\begin{align*}
(\bc{Z}_n u, v)_{\bc{B}} &= (\bc{B}_n^{-1}\bc{A}_n \, \Pi_{\bc{B}}^n u, v)_{\bc{B}} \\
 &= (\bc{B}_n^{-1}\bc{A}_n u, v)_{\bc{B}} \\
 &= (\widehat{\bc{Z}_n} u, v)_{\bc{B}}
\quad \mbox{for all} \; u, v \in V_n.
\end{align*}
Consequently,
$$
\bc{Z}_n w_n = f_n,
$$
see \eqref{NBF8}.

With the previous considerations we can write
\begin{align}\label{eq:substitution_using_linear_equations}
\bc{Z} w - \bc{Z}_n w
         &= (\bc{Z} w - f) + (f - f_n) + (f_n - \bc{Z}_n w_n) + \bc{Z}_n (w_n - w) \nonumber \\
         &= (f - f_n) +  \bc{Z}_n (w_n - w).
\end{align}
Using~(\ref{eq:rhs_convergence}), the first term vanishes as $n \rightarrow \infty$.
As for the second term, $\bc{Z}_n$ results from the Galerkin discretization of $\bc{Z}$
and therefore its norm is bounded independently of $n$. It remains to examine
the size of the difference $w_n - w$. Following the standard derivation of the C\'{e}a's lemma
(cf., e.g.,~\cite[chapter~9, derivation of the relation~(9.8)]{MSB15}), we have for any
$v \in V_n$
\begin{align*}
\alpha_{\bc{A}} \| w - w_n \|^2
   &\leq \langle \bc{A} (w - w_n),  w - w_n \rangle \\
   &= \langle \bc{B} (\bc{B}^{-1}\bc{A}) (w - w_n),  w - w_n \rangle \\
   &= ((\bc{B}^{-1}\bc{A}) (w - w_n),  w - w_n)_{\bc{B}} \\
   &= ((\bc{B}^{-1}\bc{A}) (w - w_n),  w - v)_{\bc{B}}
       +  ((\bc{B}^{-1}\bc{A}) (w - w_n),  v - w_n)_{\bc{B}} \\
   &= ((\bc{B}^{-1}\bc{A}) (w - w_n),  w - v)_{\bc{B}}
       +  ((f - \bc{Z} w_n),  v - w_n)_{\bc{B}} \\
   &= \langle \bc{A} (w - w_n),  w - v \rangle
    \leq C_{\bc{A}} \| w - w_n \| \| w - v \|.
\end{align*}
Here, $\alpha_{\bc{A}}$ and $C_{\bc{A}}$  are the coercivity and boundedness constants
associated with $\bc{A}$, and in the derivation we used the Galerkin orthogonality \eqref{NBF8}, i.e.,
$((f - \bc{Z} w_n),  v - w_n)_{\bc{B}} = 0$. Consequently, denoting
$\kappa(\bc{A}) := C_{\bc{A}} / \alpha_{\bc{A}}$,
$$ \| w - w_n \| \leq \kappa(\bc{A})\,  \inf_{v \in V_n} \| w - v \| \,.$$
Using again the approximation property~\eqref{NBF7}, 
it follows that the second
term in~(\ref{eq:substitution_using_linear_equations}) also vanishes as $n \rightarrow \infty$,
and the proof is finished.
\end{proof}

\noindent
Theorems~\ref{th:Kato_(Chatelin)} and~\ref{th:pointwise_convergence}
immediately give the final corollary.

\begin{corollary}[Spectral approximation]
\label{cor:spectral_approximation}
Consider an infinite dimensional Hilbert space $V$, its dual $V^{\#}$, and
bounded and coercive linear operators $\bc{A}, \bc{B}: V \mapsto V^{\#}$ that are
self-adjoint with respect to the duality pairing. Consider further a sequence
of subspaces $\{V_n\}$ of $V$ satisfying the approximation
property~\eqref{NBF7}. 

Let the sequences of  matrices $\{\mathbf{A}_n\}$ and  $\{\mathbf{B}_n\}$ be defined
by~(\ref{eq:discretized_operators}) - (\ref{eq:matrix_B}). Then all points in the spectrum
of the preconditioned operator
$$\bc{B}^{-1}\bc{A}: V\mapsto V$$
are approximated  to an arbitrary accuracy by the eigenvalues of the preconditioned matrices
in the sequence $\{\mathbf{B}_n^{-1}\mathbf{A}_n\}.$ \textcolor{black}{That is, for any point $\lambda \in \mathrm{sp}(\bc{\bc{B}^{-1}\bc{A}})$ and any $\epsilon > 0$, there exists $n^*$ such that $\mathbf{B}_{n^*}^{-1}\mathbf{A}_{n^*}$ has an eigenvalue $\lambda^*$ satisfying $|\lambda -\lambda^*| < \epsilon$.}
\end{corollary}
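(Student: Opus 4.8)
The plan is to combine the two preceding theorems and then carefully translate a statement about the spectrum of the extended operators $\bc{Z}_n$ on $V$ into a statement about the eigenvalues of the finite matrices $\mathbf{B}_n^{-1}\mathbf{A}_n$. Throughout I would work in the Hilbert space $V$ equipped with the inner product $(\cdot,\cdot)_{\bc{B}}$ of~(\ref{eq:iproductB}), with respect to which both $\bc{Z} = \bc{B}^{-1}\bc{A}$ and each $\bc{Z}_n$ of~(\ref{eq:extended_fp_operator}) are self-adjoint; the latter was verified in the discussion preceding Theorem~\ref{th:pointwise_convergence}.

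First I would invoke Theorem~\ref{th:pointwise_convergence}, whose hypotheses are exactly those of the corollary, to conclude that $\{\bc{Z}_n\}$ converges pointwise (strongly) to $\bc{Z}$. This puts us in position to apply Theorem~\ref{th:Kato_(Chatelin)}: for any $\lambda \in \mathrm{sp}(\bc{Z})$ and any target accuracy $\epsilon > 0$, there exists an index $j$ such that $\mathrm{sp}(\bc{Z}_j)$ contains a point $\lambda^*$ with $|\lambda - \lambda^*| < \epsilon$.

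The remaining, and only genuinely delicate, step is to show that such a $\lambda^*$ can be taken to be an eigenvalue of the matrix $\mathbf{B}_j^{-1}\mathbf{A}_j$, rather than a spurious spectral value introduced by the extension. I would first determine $\mathrm{sp}(\bc{Z}_n)$ explicitly. Decomposing $V = V_n \oplus V_n^{\perp}$ orthogonally with respect to $(\cdot,\cdot)_{\bc{B}}$, the operator $\bc{Z}_n$ restricts to $\bc{B}_n^{-1}\bc{A}_n$ on $V_n$ and annihilates $V_n^{\perp}$, since $\Pi_{\bc{B}}^n$ vanishes there. Because $V$ is infinite dimensional, $V_n^{\perp} \neq \{0\}$, so $0$ is an eigenvalue of $\bc{Z}_n$; the nonzero part of its spectrum consists exactly of the eigenvalues of $\mathbf{B}_n^{-1}\mathbf{A}_n$, all strictly positive by coercivity. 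Hence $\mathrm{sp}(\bc{Z}_n) = \{0\} \cup \mathrm{sp}(\mathbf{B}_n^{-1}\mathbf{A}_n)$, and the artificial value $0$ is the one point that must be excluded.

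Here the coercivity of $\bc{A}$ and boundedness of $\bc{B}$ are decisive: since $\bc{Z}$ is self-adjoint and bounded on $(V,(\cdot,\cdot)_{\bc{B}})$, its spectrum lies in the closure of the range of the Rayleigh quotient $u \mapsto \langle \bc{A} u, u\rangle / \langle \bc{B} u, u\rangle$, which these hypotheses bound below by a constant $\alpha > 0$ (cf.\ the estimates in the proof of Theorem~\ref{th:operator}). Thus every $\lambda \in \mathrm{sp}(\bc{Z})$ satisfies $\lambda \geq \alpha > 0$. Given $\epsilon$, I would apply Theorem~\ref{th:Kato_(Chatelin)} with the smaller radius $\epsilon' := \min\{\epsilon, \alpha/2\}$; the resulting point $\lambda^* \in \mathrm{sp}(\bc{Z}_j)$ then satisfies $\lambda^* > \lambda - \epsilon' \geq \alpha/2 > 0$, so $\lambda^* \neq 0$ and is therefore a genuine eigenvalue of $\mathbf{B}_j^{-1}\mathbf{A}_j$. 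Taking $n^* = j$ yields $|\lambda - \lambda^*| < \epsilon$, which is the claim. I expect this separation of $\mathrm{sp}(\bc{Z})$ from the artificial zero eigenvalue of the extended operators to be the main obstacle; once it is handled, the corollary is an immediate consequence of Theorems~\ref{th:Kato_(Chatelin)} and~\ref{th:pointwise_convergence}.
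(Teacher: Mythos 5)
Your proposal is correct, and its skeleton is exactly the paper's: Theorem~\ref{th:pointwise_convergence} gives the pointwise convergence $\bc{Z}_n \to \bc{Z}$ in the $(\cdot,\cdot)_{\bc{B}}$ geometry, and Theorem~\ref{th:Kato_(Chatelin)} then produces a nearby point of $\mathrm{sp}(\bc{Z}_j)$. Where you go beyond the paper is the final translation step. The paper's proof is the single remark that the two theorems ``immediately give the final corollary,'' which tacitly identifies $\mathrm{sp}(\bc{Z}_j)$ with the eigenvalues of $\mathbf{B}_j^{-1}\mathbf{A}_j$. You observe, correctly, that this identification is not exact: since $\bc{Z}_n = \bc{B}_n^{-1}\bc{A}_n\,\Pi_{\bc{B}}^n$ annihilates the nontrivial orthogonal complement of $V_n$ (nontrivial precisely because $V$ is infinite dimensional), its spectrum is $\{0\} \cup \mathrm{sp}(\mathbf{B}_n^{-1}\mathbf{A}_n)$, and the point supplied by Theorem~\ref{th:Kato_(Chatelin)} could a priori be the artificial value $0$ rather than a matrix eigenvalue. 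Your fix is sound and uses only the corollary's hypotheses: coercivity of $\bc{A}$ and boundedness of $\bc{B}$ bound the Rayleigh quotient $\langle \bc{A}u,u\rangle/\langle \bc{B}u,u\rangle$, hence $\mathrm{sp}(\bc{Z})$, below by a constant $\alpha > 0$ (the ratio of the coercivity constant of $\bc{A}$ to the boundedness constant of $\bc{B}$), so invoking Theorem~\ref{th:Kato_(Chatelin)} with radius $\min\{\epsilon,\alpha/2\}$ forces the approximating spectral point to be strictly positive and therefore an eigenvalue of $\mathbf{B}_j^{-1}\mathbf{A}_j$. In short: same route as the paper, but you identify and close a genuine (if small) gap that the paper's one-line proof passes over in silence; this added care is a strength of your argument, not a deviation.
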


\section{Numerical experiments}
\label{experiments}
We used Matlab's PDE-Toolbox to compute scalars $\lambda$ satisfying
\begin{equation}
\label{C1}
\alg{A}_n \alg{v} = \lambda \alg{B}_n \alg{v},
\end{equation}
where $\alg{A}_n$ and $\alg{B}_n$ denote the stiffness matrices defined in \eqref{eq:dis:A} and \eqref{eq:dis:B}, respectively.
Whereas our theoretical study concerns problems with homogeneous Dirichlet boundary conditions, we employed homogeneous Neumann boundary conditions in the numerical experiments below. This was done for the sake of completeness: One can show, in a straightforward manner, that the results presented in sections \ref{Preconditioning by Laplacian} and \ref{Preconditioning by a general B} also hold in the case of Neumann boundary conditions.

Due to the homogeneous Neumann boundary condition, $\alg{0} = \alg{A}_n \alg{c} = \lambda \alg{B}_n \alg{c} = \alg{0}$ for any constant vector $\alg{c}$ and any scalar $\lambda$. Matlab handles this matter, i.e., \eqref{C1} is solved subject to the constraint that $\alg{v}$ must not belong to the intersection of the nullspaces of $\alg{A}_n$ and $\alg{B}_n$.

The generalized eigenvalues and the nodal values of the function $r(x,y)=k(x,y)/g(x,y)$ are sorted in increasing order in the plots below, $\Omega=(-1,1)^2$, and
\begin{align*}
k(x,y)&=(1+50\exp(-5(x^2+y^2)))(2+sin(x+y)), \\
g(x,y)&=1+50\exp(-5(x^2+y^2)), \\
r(x,y)&=2+\sin(x+y).
\end{align*}

Figure \ref{fig1} shows the generalized eigenvalues \eqref{C1} and the nodal values of $r(x,y)$ computed with the mesh depicted in Figure \ref{fig2}. The results obtained with computations performed on a finer grid is visualized in figures \ref{fig3} and \ref{fig4}. The outcome of these experiments is as one could have anticipated from Theorem \ref{th:operator}, Theorem \ref{th:matrix} and Corollary \ref{th:taylor}.
As expected, we can also observe spreading of the computed eigenvalues over the whole spectral interval.

\begin{figure}[H]
\begin{center}
  \includegraphics[width=10cm]{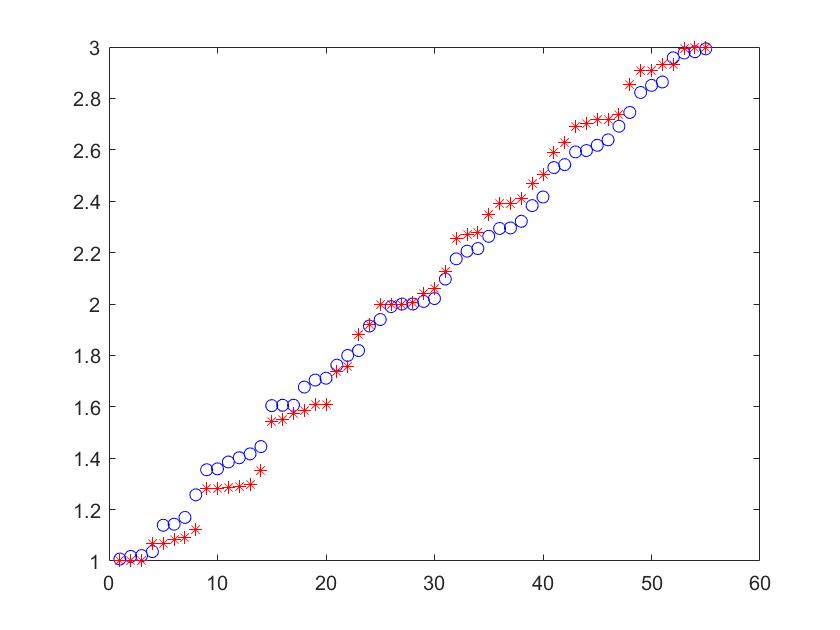}
  \caption{\label{fig1} Generalized eigenvalues (blue circles) and nodal values of $r(x,y)$ (red asterisks) computed with the coarse mesh displayed in Fig. \ref{fig2}.}
\end{center}
\end{figure}

\begin{figure}[H]
\begin{center}
  \includegraphics[width=10cm]{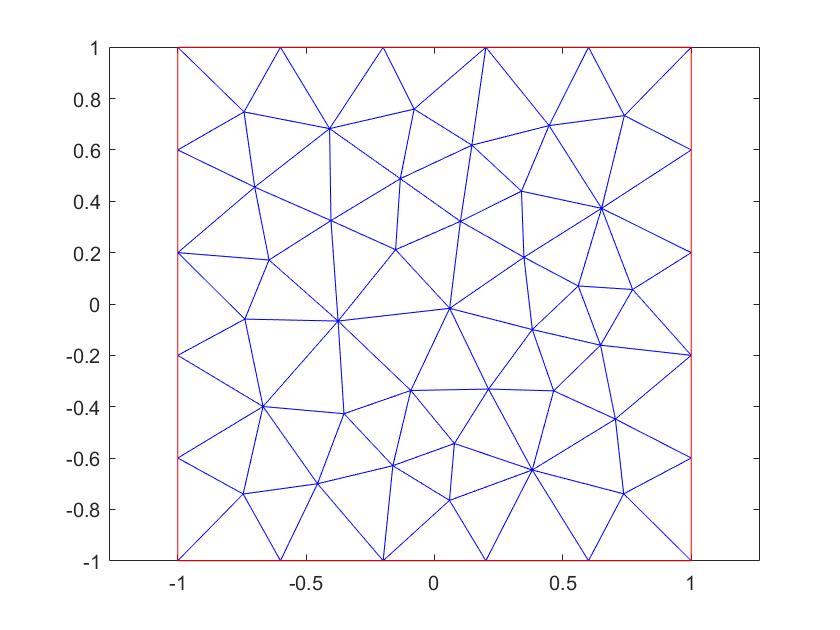}
  \caption{\label{fig2} Matlab's grid, coarse case.}
\end{center}
\end{figure}

\begin{figure}[H]
\begin{center}
  \includegraphics[width=10cm]{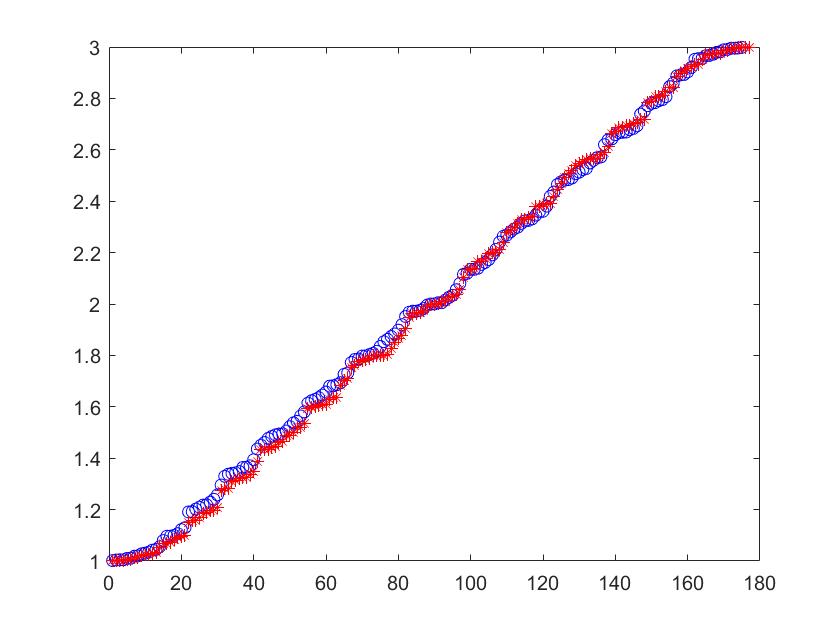}
  \caption{\label{fig3} Generalized eigenvalues (blue circles) and nodal values of $r(x,y)$ (red asterisks) computed with the ``fine'' mesh displayed in Fig. \ref{fig4}.}
\end{center}
\end{figure}

\begin{figure}[H]
\begin{center}
  \includegraphics[width=10cm]{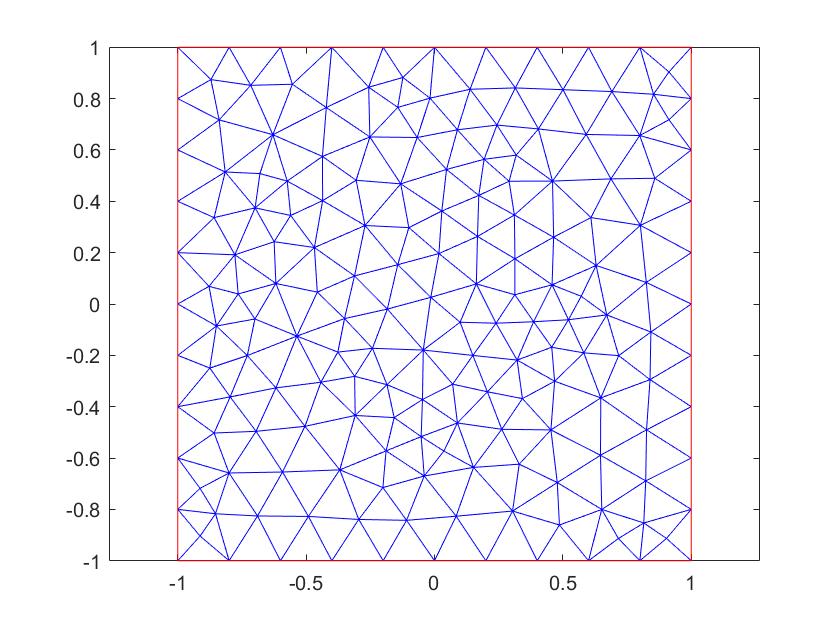}
  \caption{\label{fig4} Matlab's grid, ``fine'' case.}
\end{center}
\end{figure}

\section{Conclusions and further work.}
\label{Conclusions}
We have not only extended our earlier results \cite{Nie09,Ger19,Ger20}, addressing Laplacian preconditioning,  to preconditioners defined in terms of more general elliptic differential operators, but we have also proved that the {\em entire} spectrum of any operator in the form $\bc{B}^{-1} \bc{A}$ can be approximated with arbitrary accuracy by the eigenvalues of the associated  discretized mappings. Here, $\bc{A, B}: V\mapsto V^{\#}$ are linear, bounded, coercive and self-adoint operators defined on an infinite dimensional Hilbert space $V$, and $V^{\#}$ denotes the dual space. Our analysis  differs significantly from the classical investigations of the point spectrum of second order differential operators, which is typically done within the framework of compact (solution) operators.

In our opinion, these results yield a new perspective on the continuous spectrum of preconditioned elliptic differential operators, and there are several unanswered questions: For example, are the results presented in section \ref{Preconditioning by a general B} also valid if the coefficient functions $k(x)$ and $g(x)$ are replaced by symmetric and uniformly positive definite conductivity tensors $K(x)$ and $G(x)$, respectively? Also, and perhaps even more interesting, do Theorem \ref{th:pointwise_convergence} and Corollary \ref{cor:spectral_approximation} also hold for more general continuously invertible operators $\bc{A}: V\mapsto V^{\#}$, e.g., for saddle point problems?


\section*{Acknowledgments.}
The authors thank David Krej\v{c}i\v{r}\'{\i}k, Josef M\'{a}lek and Ivana Pultarov\'{a}
for stimulating discussions during the work on this paper.

\appendix
\section{Approximations of the spectrum of self-adjoint operators.}
\label{Appendix}

Using~\cite[Chapter~3]{1983Chatelin_Book} and~\cite[Chapter~8]{1980Kato_Book}, we first recall 
several results concerning the convergence of linear self-adjoint operators defined on infinite dimensional Hilbert spaces.
By the Hellinger-Toeplitz theorem (see~\cite[Theorem~5.7.2, p.~260]{2013Ciarlet_Book}), any linear self-adjoint
operator $\bc{Z}: V \mapsto V$ on a Hilbert space $V$ is closed and, according to the Banach closed-graph theorem, bounded (continuous).

Consider a bounded linear operator $\bc{G}: V \mapsto V$ (not necessarily self-adjoint, therefore we
for the moment change the notation) and a sequence of its bounded linear approximations
$\{\bc{G}_n \}, \bc{G}_n: V \mapsto V$, that can converge to $\bc{G}$ in different ways:
\begin{itemize}

\item
pointwise (strongly), i.e., \, $\bc{G}_n \stackrel{p}{\rightarrow} \bc{G}$
$$ \mbox{iff for all} \; x \in V, \quad \lim_{n \rightarrow \infty} \| \bc{G} x -  \bc{G}_n x\| = 0 \,;$$

\item
uniformly (in norm), iff \; $\lim_{n \rightarrow \infty} \| \bc{G}  -  \bc{G}_n \| = 0 \,;$

\item
stably, i.e. \, $\bc{G}_n \stackrel{s}{\rightarrow} \bc{G}$  \, iff

  \begin{itemize}
    \item $\bc{G}_n \stackrel{p}{\rightarrow} \bc{G}$, and
    \item the inverse operators \, $\{ \bc{G}_n^{-1} \}$ \, are uniformly bounded, i.e., for some $C > 0$,  $\|\bc{G}_n^{-1}\| \leq C$ for all $n$.
  \end{itemize}

\end{itemize}

\noindent
Clearly, uniform convergence implies pointwise convergence, but the converse implication
does not hold. Since the class of compact operators is closed with respect to uniform convergence,
the uniform convergence concept can not be used to investigate the convergence of compact to non-compact operators, such as
to bounded continuously invertible operators defined on infinite dimensional Hilbert spaces.

The spectral theory for bounded linear operators is based on the concept of the operator resolvent
$$ \bc{R}(\mu) := (\mu \mathcal{I} - \mathcal{G})^{-1}$$ and on the resolvent set
\begin{equation}\label{eq:resolvent_set}
\rho(\bc{G})
:= \left\{ \mu \in \mathbb{C}; \,  \mu \mathcal{I} - \mathcal{G} \;
\mbox{has a bounded inverse} \right\}.
\end{equation}
It is interesting to notice that, for any $\mu \in \rho(\bc{G})$, a sequence of shifted operators
$\{\mu \mathcal{I} - \mathcal{G}_n\}$ converge to $\mu \mathcal{I} - \mathcal{G}$ stably if, and only if,
$\{\mathcal{G}_n\}$  and the resolvents
$\{\bc{R}_n(\mu)\}, \bc{R}_n(\mu):= (\mu \mathcal{I} - \mathcal{G}_n)^{-1}$,
converge to $\mathcal{G}$ and $\bc{R}(\mu)$ pointwise, respectively,
i.e.,\footnote{See~\cite[Lemma~3.16]{1983Chatelin_Book}.}
\begin{equation}\label{stable_pointwise_convergence}
\mu \mathcal{I} - \mathcal{G}_n \stackrel{s}{\rightarrow} \mu \mathcal{I} - \mathcal{G}
\quad \mbox{iff} \quad
\mathcal{G}_n \stackrel{p}{\rightarrow} \mathcal{G} \; \mbox{and} \;
\bc{R}_n(\mu) \stackrel{p}{\rightarrow} \bc{R}(\mu).
\end{equation}
Indeed, using the resolvent identity
$$
\bc{R}_n(\mu) - \bc{R}(\mu) = \bc{R}_n(\mu) \, (\bc{G}_n - \bc{G}) \, \bc{R}(\mu),
$$
the right implication follows immediately from the definition of stable convergence. Conversely, from
the pointwise  convergence of $\bc{R}_n(\mu)$  and the uniform boundedness principle (Banach–Steinhaus theorem) we conclude that $\{\bc{R}_n(\mu)\}=\{ (\mu \mathcal{I} - \mathcal{G}_n)^{-1}  \}$ is uniformly bounded and the result follows.

We will now present the proof of Theorem~\ref{th:Kato_(Chatelin)};
cf. also~\cite[chapter~VIII, \S~1.2, Theorem~1.14, p.~431]{1980Kato_Book} and
\cite[chapter~5, section~4, Theorem~5.12, p.~239-240]{1983Chatelin_Book}.

\begin{theorem}[Approximation of the spectrum of self-adjoint operators]
\label{}
Let $\bc{Z}$ be a linear self-adjoint operator on a Hilbert space $V$
and let $\{\bc{Z}_n\}$ be a sequence of linear self-adjoint operators on $V$
converging to $\bc{Z}$ pointwise (strongly). Then, for any point $\lambda \in \mathrm{sp} (\bc{Z})$
in the spectrum of $\bc{Z}$, and for any of its neighbourhoods, there exists $\bc{Z}_j$
such that the intersection of its spectrum $\mathrm{sp} (\bc{Z}_j)$ with this neighbourhood
is nonempty.
\end{theorem}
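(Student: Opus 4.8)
The plan is to prove the contrapositive: if $\lambda$ has a neighbourhood that is eventually disjoint from every $\mathrm{sp}(\bc{Z}_n)$, then $\lambda \in \rho(\bc{Z})$. Since all operators are self-adjoint and hence bounded (via Hellinger--Toeplitz and the closed-graph theorem, as recalled above), their spectra are real, so it suffices to work with a real neighbourhood. Concretely, suppose there is an $\varepsilon > 0$ and an index $N$ such that $\mathrm{dist}(\lambda, \mathrm{sp}(\bc{Z}_n)) \geq \varepsilon$ for all $n \geq N$. For self-adjoint operators the norm of the resolvent equals the reciprocal of the distance to the spectrum, so
\begin{equation*}
\| (\lambda \mathcal{I} - \bc{Z}_n)^{-1} \| = \frac{1}{\mathrm{dist}(\lambda, \mathrm{sp}(\bc{Z}_n))} \leq \frac{1}{\varepsilon}
\end{equation*}
for all $n \geq N$. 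This gives a uniform bound on the resolvents $\bc{R}_n(\lambda)$, which is exactly the extra ingredient needed to upgrade pointwise convergence to stable convergence.

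Next I would invoke the machinery assembled earlier in the Appendix. The hypothesis gives $\bc{Z}_n \stackrel{p}{\rightarrow} \bc{Z}$. I want to conclude that $\bc{R}_n(\lambda) \stackrel{p}{\rightarrow} \bc{R}(\lambda)$ and that $\lambda \in \rho(\bc{Z})$. The key tool is the resolvent identity together with the uniform bound just obtained. First one shows $\lambda \in \rho(\bc{Z})$: because $\lambda \mathcal{I} - \bc{Z}_n$ has uniformly bounded inverses and converges pointwise to $\lambda \mathcal{I} - \bc{Z}$, the limit operator $\lambda \mathcal{I} - \bc{Z}$ is injective with range that is dense and closed, hence it is boundedly invertible; that is, $\lambda \in \rho(\bc{Z})$. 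This is precisely the statement that the shifted operators converge \emph{stably}, and the equivalence~\eqref{stable_pointwise_convergence} then also yields $\bc{R}_n(\lambda) \stackrel{p}{\rightarrow} \bc{R}(\lambda)$. Having shown $\lambda \in \rho(\bc{Z})$, i.e.\ $\lambda \notin \mathrm{sp}(\bc{Z})$, the contrapositive is established, which proves the theorem.

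I expect the main obstacle to be the careful verification that uniform boundedness of $\{\bc{R}_n(\lambda)\}$ plus pointwise convergence of $\{\bc{Z}_n\}$ actually forces $\lambda \mathcal{I} - \bc{Z}$ to be onto, not merely injective with dense range. The natural argument runs as follows: for arbitrary $y \in V$, set $x_n := \bc{R}_n(\lambda) y$, so $\|x_n\| \leq \varepsilon^{-1}\|y\|$ is a bounded sequence. One uses the resolvent identity
\begin{equation*}
\bc{R}_n(\lambda) - \bc{R}_m(\lambda) = \bc{R}_n(\lambda)(\bc{Z}_n - \bc{Z}_m)\bc{R}_m(\lambda),
\end{equation*}
combined with pointwise convergence of $\{\bc{Z}_n\}$ on the fixed vector $\bc{R}_m(\lambda)y$, to show that $\{x_n\}$ is Cauchy; its limit $x$ then satisfies $(\lambda \mathcal{I} - \bc{Z})x = y$, giving surjectivity. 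Since $y$ was arbitrary and the uniform bound passes to the limit, $\lambda \mathcal{I} - \bc{Z}$ is boundedly invertible. I would keep an eye on whether each cross term in the resolvent-identity estimate can be controlled uniformly; this is where the $\varepsilon^{-1}$ bound is used twice, and it is the crux that makes the Cauchy argument close. Everything else is a routine application of the self-adjoint spectral calculus and the equivalence recorded in~\eqref{stable_pointwise_convergence}.
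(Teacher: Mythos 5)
Your strategy---proving the contrapositive at the real point $\lambda$ itself---is genuinely different from the paper's proof, which moves off the real axis to $\mu = \lambda + \iota\varepsilon \in \rho(\bc{Z})$, uses $\|\bc{R}(\mu)\| = 1/\mathrm{dist}(\mu, \mathrm{sp}(\bc{Z}))$ together with the equivalence \eqref{stable_pointwise_convergence} at a point \emph{known} to be in the resolvent set, and extracts a spectral point $\lambda_n \in \mathrm{sp}(\bc{Z}_n)$ with the quantitative bound $|\lambda_n - \lambda| \leq 4\varepsilon$. Your route can be made to work, but as written it has a genuine gap exactly at the step you flagged: surjectivity of $\lambda\mathcal{I} - \bc{Z}$. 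The Cauchy argument fails because in the estimate $\|x_n - x_m\| \leq \varepsilon^{-1}\|(\bc{Z}_n - \bc{Z}_m)x_m\|$ the vector $x_m = \bc{R}_m(\lambda)y$ \emph{moves with} $m$; pointwise convergence controls $(\bc{Z}_n - \bc{Z}_m)v$ only for a fixed $v$, and strong convergence is not uniform on bounded sets in infinite dimensions. For fixed $m$ you obtain only $\limsup_{n}\|x_n - x_m\| \leq \varepsilon^{-1}\|(\bc{Z} - \bc{Z}_m)x_m\|$, and there is no reason for the right-hand side to become small as $m \to \infty$. The failure is not cosmetic: without self-adjointness the intermediate claim is false. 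Take $V = \ell^2$, let $S$ be the right shift, and let $T_n$ be the unitary cyclic permutation $e_i \mapsto e_{i+1}$ for $i \leq n$, $e_{n+1} \mapsto e_1$, identity elsewhere. Then $T_n \to S$ pointwise, $\|T_n^{-1}\| = 1$ for all $n$, and $\mathrm{dist}(0, \mathrm{sp}(T_n)) = 1$, yet $0 \in \mathrm{sp}(S)$ because $S$ is injective and bounded below but not onto. So uniform boundedness of inverses plus pointwise convergence cannot, by itself, force invertibility of the limit; your parenthetical claim that the range is ``dense and closed'' smuggles in precisely the unproved point (density), and your argument nowhere uses self-adjointness at this step, so it cannot be patched along the lines proposed.

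The repair is short and does use self-adjointness. From $\|\bc{R}_n(\lambda)\| \leq 1/\varepsilon$ for $n \geq N$ you get $\|(\lambda\mathcal{I} - \bc{Z}_n)x\| \geq \varepsilon\|x\|$ for all $x \in V$; letting $n \to \infty$ and using pointwise convergence gives $\|(\lambda\mathcal{I} - \bc{Z})x\| \geq \varepsilon\|x\|$. Hence $\lambda\mathcal{I} - \bc{Z}$ is bounded below, so it is injective with closed range, and since it is self-adjoint,
\begin{equation*}
\left(\mathrm{ran}\,(\lambda\mathcal{I} - \bc{Z})\right)^{\perp}
= \ker\left((\lambda\mathcal{I} - \bc{Z})^{*}\right)
= \ker\,(\lambda\mathcal{I} - \bc{Z}) = \{0\},
\end{equation*}
so the range is all of $V$ and $\lambda \in \rho(\bc{Z})$, the desired contradiction. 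With this substitution your contrapositive proof is complete and arguably more elementary than the paper's (no resolvent identity and no appeal to \eqref{stable_pointwise_convergence} is needed at all); what it gives up is the explicit rate of the paper's argument, which localizes an eigenvalue of a specific $\bc{Z}_n$ within $4\varepsilon$ of $\lambda$.
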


\begin{proof}
Consider any point $\lambda \in \mathrm{sp}(\bc{Z}) \subset \nmbr{R}$ in the spectrum of $\bc{Z}$.
Then, for any $\varepsilon > 0$, the point $\mu :=  \lambda + \iota \varepsilon$, where $\iota$
is the complex unit, belongs to the resolvent set $\rho(\bc{Z})$ (because $\bc{Z}$ is self-adjoint). For self-adjoint operators, the norm
of the resolvent at any point in the resolvent set is equal to the inverse of the distance of the given
point to the spectrum (see, e.g.,~\cite[Proposition~2.32]{1983Chatelin_Book}). Therefore, for all $n$,
\begin{align}
\label{NBF9}
\| \bc{R}(\mu) \| &:=
\|(\mu \mathcal{I} - \mathcal{Z})^{-1}\| = \frac{1}{\mathrm{dist}(\mu, \mathrm{sp}(\bc{Z}))}
= \frac{1}{\varepsilon}\,,\\
\label{NBF10}
\| \bc{R}_n(\mu) \| &:=
\|(\mu \mathcal{I} - \mathcal{Z}_n)^{-1}\| = \frac{1}{\mathrm{dist}(\mu, \mathrm{sp}(\bc{Z}_n))}
\leq \frac{1}{\varepsilon}\,.
\end{align}
The inequality in \eqref{NBF10} follows from the assumption that $\{\bc{Z}_n\}$ is a sequence of self-adjoint operators, i.e., $\mathrm{sp}(\bc{Z}_n) \subset \nmbr{R}$.
Note that inequality \eqref{NBF10} provides the uniform boundedness of $\{ \bc{R}_n(\mu) \}$, which, together with the pointwise convergence of $\{\bc{Z}_n\}$, yields that
$$
\mu \bc{I} - \bc{Z}_n \stackrel{s}{\rightarrow} \mu \bc{I} - \bc{Z}.
$$

Using~(\ref{stable_pointwise_convergence}), we thus have the pointwise convergence
of $\{\bc{R}_n(\mu) \}$, i.e., for any $x \in V, ||x\| = 1,$
$$
 \| \bc{R}(\mu) x \|  = \lim_{n \rightarrow \infty} \| \bc{R}_n(\mu) x \|.
$$
Consider a fixed $x \in V, ||x\| = 1,$ such that
$$
\frac{1}{2 \varepsilon} \leq \| \bc{R}(\mu) x \|  \leq \frac{1}{\varepsilon}.
$$
(The existence of such a $x \in V$ follows from \eqref{NBF9}.)
Then, from the pointwise convergence, there must exist $n$ such that
$$
  \| \bc{R}_n(\mu)\|  \geq \| \bc{R}_n(\mu) x \|  \geq \frac{1}{3 \varepsilon}.
$$
Recall that $\| \bc{R}_n(\mu) \|=\mathrm{[dist}(\mu, \mathrm{sp}(\bc{Z}_n))]^{-1}$.
Therefore there exists a point $\lambda_n \in \mathrm{sp}(\bc{Z}_n)$
such that
$$
|\lambda_n - \mu| \leq 3 \varepsilon \quad
\mbox{and, consequently,} \quad
|\lambda_n - \lambda| \leq 4 \varepsilon,
$$
which finishes the proof.
\end{proof}

\bibliographystyle{plain}
\bibliography{references}

\end{document}